\def\Hxx{\frac{\partial^2 }{\partial x^2}}
\def\Haa{\frac{\partial^2 }{\partial a^2}}
\def\gtupn{\mathbb S_n(\mathbb R^g)}
\def\gxtupn{\gtupn}
\def\gatupn{\gtupn}
\def\ggtupn{\mathbb S_n(\mathbb R^g\times\mathbb R^g)}
\def\gtup{\mathbb S(\mathbb R^g)}
\def\ggtup{\mathbb S(\mathbb R^g\times \mathbb R^g)}
\def\Rax{\mathbb R\langle a,x \rangle}
\def\degx{\text{deg}_x \ }
\def\dega{\text{deg}_a \ }
\def\and{ \ \text{and} \ }
\newtheorem{theorem}{Theorem}[section]
\newtheorem{lemma}[theorem]{Lemma}
\newtheorem{corollary}[theorem]{Corollary}
\newtheorem{proposition}[theorem]{Proposition}
\newtheorem{remark}[theorem]{Remark}
\newtheorem{example}[theorem]{Example}
\newtheorem{conjecture}[theorem]{Conjecture}
\begin{document}

\title[Partial Convexity]{Non-Commutative Partial Matrix Convexity}
%\shorttitle{Partial Convexity}

%\author[Hay, Helton, Lim, McCullough]{}

\author[Hay]{Damon M. Hay}
\address{Department of Mathematics and Statistics \\
    University of North Florida
% \\ 1 UNF Drive \\Jacksonville, FL  32224
}
\email{damon.hay@unf.edu}

\author[Helton]{J. William Helton${}^1$}
\address{Department of Mathematics\\
  University of California \\
  San Diego}
\email{helton@math.ucsd.edu}

\author[Lim]{Adrian Lim${}^2$}
\address{Department of Mathematics \\
  Vanderbilt  University}
\email{adrian.lim@vanderbilt.edu}

\author[McCullough]{Scott McCullough${}^3$}
\address{Department of Mathematics\\
  University of Florida %\\
 % Box 118105\\
%  Gainesville, FL 32611-8105\\
%  USA
}
\email{sam@math.ufl.edu}

%\subjclass[2000]{??? (Primary), ???? (Secondary)}

\keywords{Non-commutative convex polynomials, Linear Matrix Inequalities}

\thanks{${}^1$Research
  supported by the NSF, and the Ford Motor Co.}

\thanks{${}^2$Research supported by  NSF and the Ford Motor Co.}

\thanks{${}^3$Research supported by the NSF grant DMS-0140112.}

\begin{abstract}
  Let $p$ be a  polynomial in the non-commuting variables
  $(a,x)=(a_1,\dots,a_{g_a},x_1,\dots,x_{g_x})$.
  If $p$ is
  convex in the variables $x$, then $p$ has degree two in $x$ and
  moreover, $p$ has the form
$$
  p = L + \Lambda ^T \Lambda,
$$
 where $L$ has degree at most one in $x$
  and $\Lambda$ is a (column) vector which is linear in $x,$
  so that $\Lambda^T\Lambda$ is a both sum of squares
  and homogeneous of degree two.
 Of course the
 converse is true also. Further results involving various
 convexity hypotheses on the $x$ and $a$ variables separately
 are presented.
\end{abstract}

\maketitle

\section{Introduction}
 \label{sec:intro}
   Fix a positive integer $g$
   and let  $a=(a_1,\dots,a_{g})$ and $x=(x_1,\dots,x_{g})$
   denote two classes of non-commuting variables which are
   assumed to be symmetric in the sense explained below,
   and let $\Rax$ denote the
   polynomials in these variables.  Thus an element
   of $\Rax$ is an $\mathbb R$-linear combination of words
   built from $a$ and $x$.

   There is a natural involution ${}^T$ on $\Rax$ which reverses the
   order of a word determined by
\begin{equation*}
   (fg)^T = g^T f^T
\end{equation*}
   for $f,g\in\Rax$ and
\begin{equation*}
  x_j^T=x_j \ \ \ a_j^T=a_j,
\end{equation*}
  for $j=1,2,\dots,g$.

   If the polynomial  $L(a,x)$ has degree at
  most one in $x$ and if $\Lambda(a,x)$ is a
  (column) vector which is
   linear in $x$, then the polynomial
$$
  p= L(a,x)+\Lambda(a,x)^T \Lambda(a,x)
$$
   is convex in $x$, since, for each fixed $a$,
$$
  \frac12(p(a,x)+p(a,y)) -   p(a,\frac{x+y}{2})
     = \Lambda(a,\frac{x-y}{2})^T \Lambda(a,\frac{x-y}{2}).
$$
  The converse is a corollary of the main results of this paper.  We
  remark that it was already shown in \cite{HL} that if a symmetric
  polynomial is convex in $x$, then it must have degree two or less in
 $x$.

  In the remainder of this introduction we introduce the
  terminology and background necessary to state our
  main results on the structure of polynomials which
  satisfy various convexity hypotheses.  The exposition
  is restricted to the case where both the $a$ and $x$
  variables are symmetric, but, for the most part, the
  results go through with the obvious modifications to
  the situation where some of the variables
  are symmetric and others are not. A notable
  example where a convex polynomial arises is the Ricatti inequality,
 \begin{equation*}
    0\preceq -b^T x^2 b + a^T x a +c,
 \end{equation*}
  where $x$ is a symmetric unknown and $a,b,c$
  are not necessarily symmetric knowns.

  Of course there is no need to assume that
  the number $g_a$ of $a$ variables is the same as the
  number $g_x$ of $x$ variables, but it does simplify
  the exposition.  The interested reader should
  have no problem in refining various estimates
  which depend upon $g$ in the case where $g_a\ne g_x$.

\subsection{Non-commutative polynomials}
  A non-commutative polynomial, or simply
  polynomial, $q$ in $g$ non-commuting variables $y=(y_1,\dots,y_g)$
  is a $\mathbb R$-linear combination of words in the letters
  $y=(y_1,\dots,y_g)$. Thus
\begin{equation*}
   q=\sum q_w w,
\end{equation*}
  where the sum is finite, the $w$'s are words, and $q_w\in \mathbb
 R$.

  There is a natural involution ${}^T$ on words
  in $y$ which reverses the order of the product. Namely,
\begin{equation}
 \label{eq:word}
   y_{j_1}y_{j_2}\cdots y_{j_n}=w \mapsto w^T = y_{j_n}\cdots
 y_{j_2}y_{j_1}.
\end{equation}
  This involution naturally extends to polynomials by linearity,
$$
  q^T =\sum q_w w^T,
$$
  and a polynomial $p$ is symmetric if $p=p^T$.  As noted before,
  the conventions here mean that $y_j^T=y_j$ and so in this sense
  the variables themselves are symmetric.

  Polynomials are naturally evaluated at $g$-tuples of symmetric
  matrices. Let $\mathbb S_n$ denote the symmetric $n\times n$
  matrices with real entries and let $\gtupn$ denote
  $g$ tuples $Y=(Y_1,\dots,Y_g)$ where each $Y_j\in\mathbb S_n$.
  There is no requirement  that the $Y_j$ commute. Given
  the word $w$ from equation (\ref{eq:word}),
\begin{equation*}
  w(Y)=Y^w = Y_{j_1}Y_{j_2}\cdots Y_{j_n}
\end{equation*}
 as expected, and of course,
\begin{equation*}
  q(Y)=\sum q_w Y^w.
\end{equation*}

  Note that the involution on polynomials is compatible with
  the transpose operation on matrices so that $q(Y)^T=q^T(Y)$.
  In particular, if $p$ is symmetric, then so is $p(Y)$.

\subsection{Convexity}
  A symmetric polynomials $p$ is convex provided
\begin{equation}
 \label{eq:def-convex}
     [tp(Y)+(1-t)p(Z)]-p(tY+(1-t)Z) \succeq 0
\end{equation}
  for all $n$, all pairs $Y,Z\in\gtupn$ and $0<t<1.$  Here $P\succeq
 0$
  means the square matrix $P$ is positive semi-definite in the
  sense that $P=P^T$ and all of its eigenvalues are non-negative.

  Even in one variable ($g=1$) convexity in this non-commutative
  setting is different from ordinary convexity since there
  is no requirement that $Y$ and  $Z$ commute. Indeed, the
  polynomial $p(y)=y^4$ in the single variable $y$ is
  not convex (for a simple example where convexity
  fails for this $p$ see \cite{HM04}).

  The notion of convexity for a polynomial naturally
  extends to partial convexity; i.e., convexity in
  a subset of the variables.

 \subsection{Domains and Convexity}
  It is natural to consider polynomials
   which are assumed convex only
  on   a subset; i.e., where the inequality of
  equation \ref{eq:def-convex} is to hold
  only for some choices of pairs $Y$ and $Z$.
    As a preliminary,
  it is necessary to discuss {\it non-commutative domains}
  or {\it domains} for short.

  Let $\gtup$ denote the sequence
  $(\gtupn)_n$.  A non-commutative domain $\mathcal D$
  in $\gtup$ is a sequence $\mathcal D=(\mathcal D_n)_n$
  where each $\mathcal D_n \subset \gtupn$ which is
  {\it closed under direct sums} in the sense that if
  $D_j\in  S_{n_j}(\mathbb R^g)$, then
  $D_1\oplus D_2 \in  S_{n_1+n_2}(\mathbb R^g)$.

\subsubsection{Non-commutative domains}
 \label{subsubsec:NCDomains}
  The domain $\mathcal D$ is {\it open} if $\mathcal D_n$ is open
  in $\gtupn$ for each $n$; it is {\it convex} if each $\mathcal D_n$
 is convex;
  and it is {\it matrix convex} if for any isometry
  $V:\mathbb R^{n_1} \to \mathbb R^{n_2}$ and
  $D=(X_1,\dots,X_g)\in\mathcal D_{n_2}$,
  $V^TDV=(V^TX_1V, \dots,V^T X_gV)\in \mathcal D_{n_1}$;
  the domain is {\it semi-algebraic} if there is a finite set
  $\mathcal P$ of symmetric polynomials such that
  $\mathcal D_n=\{X\in\gtupn: p(X)\succ 0, \mbox{ for all } p \in
 \mathcal P\}.$

  Below are several examples which are presented to illustrate
  the ideas or because they will play a role in the
  sequel.

 \begin{example}\rm {\bf The $\epsilon$-neighborhood of $0$.}
  Given $\epsilon$ the sequence of sets,
 \begin{equation*}
   N_n = \{ X\in\gtupn : \sum X_j^2 \prec \epsilon I_n\}
 \end{equation*}
  is an open matrix convex semi-algebraic domain.
 \end{example}

 \begin{example}{\bf Products.} \rm
   If $\mathcal U,\mathcal V \subset\gtup$  are domains, then
   so is the {\it product}
   $\mathcal U\times \mathcal V
   = (\mathcal U_n \times \mathcal V_n)_n \subset \ggtup$.

   The conditions, open, convex,  matrix-convex,  and semi-algebraic,
 are
   all preserved under products.
 \end{example}

  \begin{example}{\bf Coordinate Projections.} \rm
     Given a domain $\mathcal W \subset \ggtup$,
     let
  \begin{equation*}
    \pi_a(\mathcal W_n) = \{A\in\gatupn : \mbox{ there exists } X\in
 \gxtupn
      \mbox{ such that } (A,X) \in \mathcal W_n\}.
  \end{equation*}
    The coordinate projection $\pi_a$ preserves open, convex, and
 matrix
    convex domains.
 \end{example}

\subsubsection{Partial convex domains}
  Many of the notions surrounding domains have partial versions; i.e.,
  versions applied to a subset of the variables. For instance,
  a domain $\mathcal W$ in $\ggtup$
  is {\it open in $x$} if for each $n$ and $(A,X)\in\mathcal W_n$
 there
  is an open subset $U$ of $\gxtupn$ containing $X$
  such that $\{A\}\times U \subset \mathcal W_n$.

  In general, matrix convex implies convex (this depends upon
  the closed with respect to direct sums hypothesis).
  It turns out that if $\mathcal D$ is semi-algebraic, then
  convex implies matrix convex so that these two notions
  agree.

\subsubsection{Partial  Convexity}
  A polynomial $p(a,x)$ is {\it convex in $x$} on the open in $x$
 domain
  $\mathcal W$ if for each $n$ and $(A,X),(A,Y)\in \mathcal W_n$ such
   that $t(A,X)+(1-t)(A,Y) \in\mathcal W_n$ for each $0\le t\le 1$,
  it follows that
 \begin{equation*}
   t p(A,X) + tp(A,Y) \succeq p(A, tX+(1-t)Y).
 \end{equation*}

\subsubsection{Main Convexity Results}
 \label{subsubsec:main-convexity-results}
 This subsection contains most of the main results
 on partially convex polynomials.

\begin{theorem}
 \label{cor:convexinxforainU}
  Let $\mathcal W = (\mathcal  W_n)_n$ be an open domain
  in $\gtup$.
  If $p(a,x)$ is convex in $x$ on $\mathcal W$, then
$$
  p(a,x)= L(a,x) +V(a,x)^T Z(a) V(a,x),
$$
  where
 \begin{itemize}
   \item[(i)] $Z(a)$ is a (square) matrix-valued symmetric polynomial;
   \item[(ii)] $Z(A)\succeq 0$ for each $A\in \pi_a(\mathcal W)$;
   \item[(iii)] $V(a,x)$ is a (column) vector whose entries are linear
 in $x$; and
   \item[(iv)] $L(a,x)$ is a symmetric polynomials with degree at most
 one in $x$.
  \end{itemize}

  It follows that $p(a,x)$ is convex in $x$ on the product domain
   $\pi_a(\mathcal W) \times \mathbb S(\mathbb R^{g_x})$ and has
  degree at most two in $x$.
\end{theorem}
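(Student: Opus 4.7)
The plan is to reduce to the degree-two case and then extract a Gram representation by a non-commutative ``border-vector'' argument. First, I would appeal to the result of \cite{HL} cited in the introduction to conclude that $p$ has degree at most two in $x$. Although \cite{HL} is stated for globally convex polynomials, its proof is local: it uses only convexity of $p$ in $x$ on a neighborhood of a single point in the matrix variables, and openness of $\mathcal W$ in $x$ supplies such a neighborhood at every $(A,X_0)\in\mathcal W$. I would then decompose
$$p(a,x) = L(a,x)+Q(a,x),$$
where $L$ has degree at most one in $x$ and $Q$ is homogeneous of degree two in $x$; since $p$ is symmetric and the word-reversal involution preserves $x$-degree, both $L$ and $Q$ are symmetric. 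The identity
$$\tfrac12\bigl(p(a,x)+p(a,y)\bigr)-p\bigl(a,\tfrac{x+y}{2}\bigr) = Q\bigl(a,\tfrac{x-y}{2}\bigr)$$
from the introduction, combined with convexity of $p$ in $x$ on $\mathcal W$ and openness of $\mathcal W$ in $x$, yields $Q(A,H)\succeq 0$ for every $A\in\pi_a(\mathcal W)$ and every sufficiently small $H\in\gxtupn$; homogeneity of $Q$ in $x$ then extends this to all $H\in\gxtupn$.

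Next, I would record the Gram representation. Since $Q$ is symmetric and homogeneous of degree two in $x$, a standard bookkeeping argument (enumerate the linear-in-$x$ monomials arising in $Q$ and collect coefficients) produces a column vector $V(a,x)$ whose entries are monomials linear in $x$ together with a symmetric matrix polynomial $Z(a)$ in $a$ alone such that $Q(a,x) = V(a,x)^T Z(a) V(a,x)$. Properties (i), (iii) and (iv) of the theorem then hold by construction.

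The heart of the argument is property (ii): $Z(A)\succeq 0$ for every $A\in\pi_a(\mathcal W)$. Fix $A\in\pi_a(\mathcal W)_n$. Because $\pi_a(\mathcal W)$ inherits closure under direct sums from $\mathcal W$, the $N$-fold direct sum $A^{(N)}:=A\oplus\cdots\oplus A$ lies in $\pi_a(\mathcal W)_{nN}$, and polynomial evaluation respects direct sums, so $Z(A^{(N)})=Z(A)\oplus\cdots\oplus Z(A)$. For every tuple $H$ of $nN\times nN$ symmetric matrices and every vector $\xi$, the pointwise inequality $Q(A^{(N)},H)\succeq 0$ gives $\langle Z(A^{(N)})\,V(A^{(N)},H)\xi,\,V(A^{(N)},H)\xi\rangle \geq 0$. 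The crucial technical step will be to show that, for $N$ sufficiently large, the vectors $V(A^{(N)},H)\xi$ span the entire ambient Euclidean space as $H$ and $\xi$ vary, by plugging in block matrices for $H$ whose off-diagonal entries pick out individual basis vectors. Once spanning is established, $Z(A^{(N)})\succeq 0$, and hence $Z(A)\succeq 0$. This spanning step, a non-commutative middle-matrix or border-vector argument in the spirit of earlier Helton--McCullough work, is the main obstacle; the remaining pieces are routine.

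With the structural formula in hand, the concluding claim is immediate: for any $A\in\pi_a(\mathcal W)$, $L(A,X)$ is affine in $X$ and $V(A,X)^T Z(A) V(A,X)$ is a quadratic form with positive semidefinite Gram matrix, so $p(A,X)$ is convex in $X$ for every $X\in\mathbb S(\mathbb R^{g_x})$ and in particular on $\pi_a(\mathcal W)\times\mathbb S(\mathbb R^{g_x})$, while the degree-two bound in $x$ is visible directly from the formula $p = L + V^T Z V$.
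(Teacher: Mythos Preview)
Your overall architecture is close to the paper's, but the ``crucial technical step'' you propose has a genuine gap. Taking $A^{(N)}=A\oplus\cdots\oplus A$ cannot produce the spanning you need: any linear relation $\sum_m c_m\,m(A)=0$ among the $a$-monomials evaluated at $A$ is inherited verbatim by $A^{(N)}$, so the range of $(H,\xi)\mapsto V(A^{(N)})[H]\xi$ is trapped in the corresponding proper subspace for every $N$. For small $A$ (e.g.\ $A=0$, or any $A$ of size below $N(g,d_a)$) such relations are unavoidable, and your inequality then tests $Z(A^{(N)})$ only on that subspace---not enough to force $Z(A)\succeq 0$.

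The paper's remedy is to direct-sum with a \emph{different} point rather than with $A$ itself. Because $\mathcal W$ is open, faithfulness (Lemma~\ref{lem:weakfaith}) together with Lemma~\ref{lem:ind-irr-directsum} supplies some $(B,Y)\in\mathcal W$ and a vector $u$ for which $\{m(B,Y)u:\deg_a m\le d_a,\ \deg_x m\le d_x-2\}$ is linearly independent. For an arbitrary $C\in\pi_a(\mathcal W)$ one then works at $B\oplus C$, where linear independence persists (Lemma~\ref{lem:for-free}). Even there the CHSY lemma does not give full spanning---only codimension bounded by a constant $\kappa$ independent of matrix size. The conclusion $Z\succeq 0$ comes from tensoring: at $(B\oplus C)\otimes I_\ell$ with $\ell>\kappa$, a single negative eigenvalue of $Z(B\oplus C)$ would force at least $\ell>\kappa$ negative eigenvalues of $Z\bigl((B\oplus C)\otimes I_\ell\bigr)=Z(B\oplus C)\otimes I_\ell$, contradicting the codimension bound. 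Hence $Z(B\oplus C)\succeq 0$, and restricting to the $C$-block gives $Z(C)\succeq 0$. Your outline would go through once you replace ``$A^{(N)}$'' by ``$A\oplus B$ for a generic $B$'' and replace ``span the entire space'' by this bounded-codimension-plus-tensor argument; note that the same linear-independence input is what drives the degree-two conclusion in Proposition~\ref{thm:local-again}, so invoking \cite{HL} separately is not really a shortcut.
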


\begin{proof}See Corollary
\ref{cor:convexinxforainU-again}.\end{proof}

  If $Z(A)\succeq 0$ for all $A$, then, by a result in \cite{Mlaa}, $Z$
 factors as $Z=R^TR$ for a
  matrix-valued polynomial $R$ giving the following variant
  of Theorem \ref{cor:convexinxforainU}.

\begin{theorem}
 \label{cor:convexinxalla}
  Suppose $p(a,x)$
  is a symmetric polynomial and $\mathcal U=(\mathcal U_n)_n$
  is an open domain in $\mathbb S(\mathbb R^{g})$.
  If $p(a,x)$ is convex on the product domain
  $\gtup \times \mathcal U$,   then
$$
  p(a,x)= L(a,x) + \Lambda(a,x)^T \Lambda(a,x),
$$
   where
 \begin{itemize}
  \item[(i)] $\Lambda(a,x)$ is a (column) vector and is linear in $x$;
 and
  \item[(ii)] $L(a,x)$ has degree at most one in $x$.
 \end{itemize}

  Consequently, $p$ has degree at most two in $x$ and is globally
 convex
  in $x$.
\end{theorem}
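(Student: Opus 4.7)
The plan is to deduce this theorem as a direct corollary of Theorem \ref{cor:convexinxforainU} combined with the matrix-valued sum-of-squares result of \cite{Mlaa}. One applies Theorem \ref{cor:convexinxforainU} to $\mathcal{W} = \gtup \times \mathcal{U}$, which is an open (hence open in $x$) domain in $\ggtup$ since $\mathcal{U}$ is open. This produces a decomposition
$$
p(a,x) = L(a,x) + V(a,x)^T Z(a) V(a,x),
$$
where $L$ has degree at most one in $x$, the column $V$ is linear in $x$, $Z$ is a symmetric matrix-valued polynomial, and $Z(A) \succeq 0$ for every $A \in \pi_a(\mathcal{W})$.

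The next step is to note that $\pi_a(\mathcal{W}_n) = \gtupn$ at every level $n$ for which $\mathcal{U}_n$ is nonempty, which, by closure of the (open) domain $\mathcal{U}$ under direct sums, is every $n$. Thus $Z(A) \succeq 0$ for every tuple of symmetric matrices $A$ of every size, i.e. $Z$ is a matrix-positive non-commutative polynomial. Invoking the matrix sum-of-squares theorem of \cite{Mlaa} furnishes a matrix polynomial $R(a)$ with $Z(a) = R(a)^T R(a)$. Setting $\Lambda(a,x) := R(a)V(a,x)$ one obtains
$$
p(a,x) = L(a,x) + V(a,x)^T R(a)^T R(a) V(a,x) = L(a,x) + \Lambda(a,x)^T \Lambda(a,x),
$$
and $\Lambda$ is linear in $x$ because $V$ is linear in $x$ while $R$ depends only on $a$.

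The remaining assertions are then immediate. The degree bound in $x$ is visible from the form of $p$, and the global convexity in $x$ follows from the identity already displayed at the start of the introduction,
$$
\frac{1}{2}\bigl(p(a,X) + p(a,Y)\bigr) - p\!\left(a, \frac{X+Y}{2}\right) = \Lambda\!\left(a, \frac{X-Y}{2}\right)^T \Lambda\!\left(a, \frac{X-Y}{2}\right) \succeq 0,
$$
valid for all symmetric matrix tuples $X, Y$ and every $A$. In this corollary-level argument there is essentially no new obstacle: the structural decomposition has already been extracted in Theorem \ref{cor:convexinxforainU}, and the upgrade from positivity of $Z$ to a sum-of-squares factorization is deferred to the cited matrix Positivstellensatz. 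The only mildly delicate bookkeeping point is verifying that $\pi_a(\gtup \times \mathcal{U}) = \gtup$ at every size, which is exactly where the hypothesis that the $a$-factor of the product domain is all of $\gtup$ is used.
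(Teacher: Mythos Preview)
Your argument is correct and matches the paper's own: the paragraph between Theorems \ref{cor:convexinxforainU} and \ref{cor:convexinxalla} is exactly this reduction---apply Theorem \ref{cor:convexinxforainU}, observe $\pi_a(\gtup\times\mathcal U)=\gtup$ so that $Z(A)\succeq 0$ for all $A$, and then factor $Z=R^TR$ via \cite{Mlaa} to set $\Lambda=RV$. One small correction to your bookkeeping: closure under direct sums does not literally force $\mathcal U_n\ne\emptyset$ for \emph{every} $n$, only for all multiples of some $m$ with $\mathcal U_m\ne\emptyset$; however, since $Z(A^{\oplus m})=Z(A)^{\oplus m}$, positivity at those sizes already yields $Z(A)\succeq 0$ for every $A$, so the conclusion is unaffected.
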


  The previous theorem can be used to deduce the structure
  of polynomials  $p(a,x)$ which are either convex or concave in each
  variable separately.

\begin{theorem}
 \label{thm:concave/convex}
    Suppose $p(a,x)$ is symmetric. If $p$ is convex in $x$ and
    concave in $a$, then there exists linear (homogeneous
    of degree one) polynomials
    $r_j(x)$ and $s_l(a)$ and a polynomial $L(a,x)$ which
    has degree one in both $x$ and $a$ (so joint degree at most two)
    such that
 $$
   p(a,x) = L(a,x) + R(x)^T R(x) - S(a)^T S(a),
 $$
   where $R(x)$ is the (column) vector with entries $r_j(x)$
   and likewise for $S(a)$.
\end{theorem}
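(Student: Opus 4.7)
My plan is to invoke Theorem \ref{cor:convexinxalla} twice and extract structural constraints from the two resulting representations.

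Since $p$ is convex in $x$, Theorem \ref{cor:convexinxalla} (with $\mathcal U = \gtup$) produces
$$
p(a,x) = L_1(a,x) + \Lambda(a,x)^T \Lambda(a,x),
$$
where $\Lambda$ is a column vector homogeneous of degree one in $x$ and $\deg_x L_1 \leq 1$. Since $p$ is concave in $a$, $-p$ is convex in $a$; applying the same theorem with the roles of $a$ and $x$ interchanged yields
$$
p(a,x) = L_2(a,x) - M(a,x)^T M(a,x),
$$
with $M$ homogeneous of degree one in $a$ and $\deg_a L_2 \leq 1$. In particular $\deg_x p \leq 2$ and $\deg_a p \leq 2$, so $p$ splits into bi-homogeneous components $p_{i,j}$ with $i,j \in\{0,1,2\}$.

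Because $L_1$ has $\deg_x\leq 1$, the homogeneous degree-two-in-$x$ component of $p$ equals $\Lambda^T\Lambda$ exactly, and hence $\deg_a(\Lambda^T\Lambda)\leq \deg_a p\leq 2$. Decomposing $\Lambda = \Lambda_0+\Lambda_1+\cdots+\Lambda_K$ by homogeneous $a$-degree with $\Lambda_K\neq 0$, the top $a$-degree part of $\Lambda^T\Lambda$ is exactly $\Lambda_K^T\Lambda_K$, and this is nonzero because evaluating at any symmetric tuple $(A,X)$ with $\Lambda_K(A,X)\neq 0$ produces a nonzero positive semi-definite matrix. Hence $2K\leq 2$, forcing $K\leq 1$; symmetrically $M = M_0+M_1$ with $M_j$ homogeneous of $x$-degree $j$. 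Comparing the bi-homogeneous $(2,2)$ components next, the first representation gives $p_{2,2}=\Lambda_1^T\Lambda_1$ and the second gives $p_{2,2}=-M_1^T M_1$, so $\Lambda_1^T\Lambda_1 + M_1^T M_1 = 0$. Since each summand is pointwise positive semi-definite on symmetric tuples, both polynomials must vanish identically, forcing $\Lambda_1 = M_1=0$.

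Set $R(x):=\Lambda_0$, $S(a):=M_0$, and $L:=p - R^T R + S^T S$. From $p=L_1+R^T R$ one obtains $L=L_1+S^T S$, which has $\deg_x L \leq 1$; from $p=L_2-S^T S$ one obtains $L=L_2-R^T R$, which has $\deg_a L \leq 1$. Hence $L$ has degree at most one in each of $a$ and $x$, and $p=L+R^T R - S^T S$ as claimed. The main obstacle is the $a$-degree reduction for $\Lambda$: it hinges on identifying $\Lambda^T\Lambda$ precisely with the degree-two-in-$x$ part of $p$ (so that the other representation caps its $a$-degree at two) and then noting that the top homogeneous piece $\Lambda_K^T\Lambda_K$ cannot cancel within $\Lambda^T\Lambda$ itself. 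Once $\Lambda$ is seen to be constant in $a$ and $M$ constant in $x$, the $(2,2)$ comparison and the assembly of $L$ are essentially bookkeeping.
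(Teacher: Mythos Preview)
Your argument is correct. The paper's proof takes a slightly different route: it applies the convex-in-$x$ factorization once, writes $\Lambda=\Lambda_0+\Lambda_1$ and $L=L_0+L_1+L_2$ by $a$-degree, and then computes the Hessian in $a$ directly, obtaining
\[
\tfrac12\,\Haa p(a,x)[k]=\Lambda_1(k,x)^T\Lambda_1(k,x)+L_2(k,x)\preceq 0.
\]
A scaling in $x$ then kills $\Lambda_1$ and forces $L_2$ to be independent of $x$ and a negative sum of squares, yielding $S(a)$. Your approach instead invokes Theorem~\ref{cor:convexinxalla} a second time (for $-p$ in the $a$ variable), obtaining the factorization $p=L_2-M^TM$ up front, and then kills $\Lambda_1$ and $M_1$ simultaneously by comparing the $(2,2)$ bi-homogeneous pieces of the two representations. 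Your route is more symmetric and avoids the explicit Hessian computation by reusing the packaged theorem; the paper's route is more self-contained in that it extracts the $-S(a)^TS(a)$ term by hand from the negativity of the $a$-Hessian rather than appealing to the factorization result a second time. Both arguments rest on the same non-cancellation observation (the degree-two-in-$x$ part of $p$ is exactly $\Lambda^T\Lambda$, so its $a$-degree is capped by $\deg_a p$), and both finish with the same degree bookkeeping for $L$.
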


\begin{proof}See Theorem \ref{thm:concave/convex-again}.\end{proof}

\begin{theorem}
 \label{thm:convexinxanda}
 If $p(a,x)$ is (globally) convex in $a$ and $x$ separately, then there
 exists a
 polynomial $L(a,x)$ and and a (column)  vector of polynomials
 $\Lambda(a,x)$ which has degree at most one
 in $x$ and $a$ separately (thus at most degree two jointly) such that
$$
  p(a,x)= L(a,x)+\Lambda(a,x)^T \Lambda(a,x).
$$
  Here $\Lambda(a,x)$ is a column vector so that
  $\Lambda(a,x)^T \Lambda(a,x) = \sum \Lambda_j(a,x)^T \Lambda_j(a,x)$
  is a sum of squares.
\end{theorem}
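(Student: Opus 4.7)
The plan is to invoke Theorem \ref{cor:convexinxalla} once in the $x$-variables, harvesting the sum-of-squares shape in $x$, and once in the $a$-variables, harvesting only the fact that $p$ has $a$-degree at most two; the candidate $\Lambda$ for the statement will be the vector produced by the first application, and the work will consist of showing it has $a$-degree at most one in addition to its built-in $x$-degree of one.

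Since $p$ is globally convex in $x$, Theorem \ref{cor:convexinxalla} applied with $\mathcal U = \mathbb S(\mathbb R^{g_x})$ gives
\[
p(a,x) = L_1(a,x) + \Lambda_1(a,x)^T \Lambda_1(a,x),
\]
with $\Lambda_1$ homogeneous of degree one in $x$ and $L_1$ of $x$-degree at most one; applying the same theorem with the roles of $a$ and $x$ interchanged to the convexity of $p$ in $a$ yields that $p$ has $a$-degree at most two. I will then decompose $\Lambda_1 = \Lambda_{1,0} + \cdots + \Lambda_{1,d}$ into $a$-homogeneous pieces, where $\Lambda_{1,k}$ is linear in $x$ and homogeneous of degree $k$ in $a$, with $\Lambda_{1,d}\neq 0$. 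Because $L_1$ has $x$-degree at most one, the bidegree $(2d,2)$ component of the identity $p = L_1 + \Lambda_1^T\Lambda_1$ in $(a,x)$ is exactly $\Lambda_{1,d}^T\Lambda_{1,d}$; the $a$-degree bound on $p$ then forces $\Lambda_{1,d}^T\Lambda_{1,d} = 0$ identically whenever $2d > 2$. Evaluating at an arbitrary pair of symmetric matrix tuples $(A,X)$ represents the zero matrix as $\sum_i \Lambda_{1,d,i}(A,X)^T\Lambda_{1,d,i}(A,X)$, a sum of positive semi-definite matrices, so each summand vanishes and $\Lambda_{1,d} = 0$, contradicting the choice of $d$. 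Hence $d \le 1$, and setting $\Lambda := \Lambda_1$, $L := L_1$ delivers the desired decomposition.

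The main obstacle is essentially cosmetic: one must notice that the top $a$-degree component of $\Lambda_1^T\Lambda_1$ is itself a sum of squares, so that the convexity in $a$ can be mined merely for a crude degree bound rather than a second full structural decomposition. With that observation the two convexity hypotheses combine cleanly, and the only remaining ingredient is the standard non-commutative fact that $\sum_i q_i^T q_i$ vanishes identically only when each $q_i$ does.
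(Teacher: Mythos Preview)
Your argument is correct and cleanly establishes that the vector $\Lambda_1$ coming from Theorem~\ref{cor:convexinxalla} has $a$-degree at most one; this is precisely the content of the paper's Lemma~\ref{lem:convex/degree2}, which you have essentially rediscovered. For the statement as literally worded---with a degree constraint imposed only on $\Lambda$---this suffices, and is far shorter than the paper's argument.

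The catch is that the statement as written places no constraint on $L$, which would render it vacuous (take $L=p$, $\Lambda=0$); the paper's restated version, Theorem~\ref{thm:convexinxanda-again}, makes explicit that $L$ must also have degree at most one in each of $a$ and $x$ separately. Your $L_1$ need not satisfy this: for $p=x^2+a^2$ your procedure yields $\Lambda_1=x$ and $L_1=a^2$, which has $a$-degree two. What you have proved is exactly the paper's opening step. The remaining work---the bulk of the paper's proof---applies Theorem~\ref{cor:convexinxalla} a second time with the roles of $a$ and $x$ interchanged, matches the homogeneous components of the two resulting sum-of-squares representations via a Douglas-lemma partial isometry (Lemma~\ref{lem:Aunique}), and then explicitly constructs a new $\Lambda$ that absorbs the degree-two-in-$a$ part of $L_1$ into the sum of squares, leaving an $L$ of degree at most one in each variable.
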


\begin{proof}See Theorem \ref{thm:convexinxanda-again}. \end{proof}

\begin{remark}\rm
   The converse to each of the theorems in this subsubsection is
 evidently true.
\end{remark}

\subsection{Positivity of the Hessian}
 \label{subsec:positive-hessian}
  Just as in the classical commutative case, for a (non-commutative)
  symmetric polynomial, convexity implies that
  the {\it Hessian} - a version of
  the second derivative - is positive semi-definite.
  Conversely, a variety of fairly weak positivity hypotheses
  on the Hessian impose very strong restrictions on the polynomial.

\subsubsection{Definition of the Partial Hessian}
 \label{subsubsec:def-partial-hessian}
  Let $p(a,x)$ be a given symmetric polynomial. The {\it partial
 Hessian
  of $p$} with respect to $x$ in the direction $h=(h_1,\dots,h_{g})$
  is the formal second derivative of the polynomial in $t$,
  ${\tt{p}}(t)=p(a,x+th)$.  Thus, the partial Hessian,
 \begin{equation*}
   \frac{\partial^2 }{\partial x^2}p(a,x)[h]
     = {\tt{p}^{\prime\prime}}(0)
 \end{equation*}
  is a polynomial
  in $3g$ variables and is homogeneous of degree two in $h$.

  This partial Hessian can also be described algebraically
  on a monomial $m(a,x)$ by replacing each pair of variables
  $x_j$ and $x_k$ with $h_j$ and $h_k$ respectively and then
  multiplying by two. In particular, the partial Hessian of
  monomials of degree zero or one in $x$ is $0$; and
  the partial Hessian of a monomial of degree two in $x$
  is simply that monomial multiplied by $2$ with $x$ replaced by $h$.

 A simple observation, which will be used repeatedly
  without further comment, is that if $p(a,x)$ has degree two in $x$
  then $\Hxx p(a,x)[h]$ depends only on $x$ and
  $h$ and moreover there is a polynomial $L(a,x)$ of degree at most one
 in $x$
  so that
\begin{equation}
 \label{eq:deg-two}
   p(a,x)=\frac12 \Hxx p(a,x)[x] + L(a,x).
\end{equation}
  Moreover, this representation is unique in the sense that
  the Hessian is the part of $p(a,x)$ which is homogeneous
  of degree two in $x$ and $L(a,x)$ contains the remaining
  part of $p$.  In particular, there can be no cancellation between
  these two terms.

  Examples will be given in Section \ref{subsec:examples}.

\subsubsection{Convexity and the Hessian}
 \label{subsubsec:convexity-hessian}
  The following Proposition
  says that  convexity of $p$ on an open in $x$ domain $\mathcal D$
  implies positivity of the partial Hessian
  on $\mathcal D$.

\begin{proposition}
 \label{prop:convex-hessian}
    Suppose $p(a,x)$ is a symmetric polynomial,  $A\in\gatupn,$
    and $\mathcal U$ is an open convex set in $\gxtupn$. If
    $p(A,X)$ is convex for $X\in \mathcal U$, then, for
    each $X\in\mathcal U$ and $H\in\gxtupn,$
 \begin{equation*}
     \Hxx p(A,X)[H]\succeq 0.
 \end{equation*}

   Explicitly, the hypothesis on $p$ is that for the given
   $A$ and each $X,Y\in\mathcal U$, and $0<t<1$,
 \begin{equation*}
   p(A,tX+(1-t)Y) \preceq   t p(A,X)+ (1-t) p(A,Y).
 \end{equation*}
\end{proposition}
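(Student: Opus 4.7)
The plan is to carry over the classical second-derivative test for convexity to the matrix-valued setting, using nothing more than the midpoint convexity inequality at two symmetric points about $X$.

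Fix $A\in\gatupn$, $X\in\mathcal{U}$, and $H\in\gxtupn$.  Because $\mathcal{U}$ is open, there is some $\epsilon>0$ such that $X+sH\in\mathcal{U}$ for every real $s$ with $|s|<\epsilon$; since $\mathcal{U}$ is also convex, the entire segment joining $X+sH$ and $X-sH$ lies in $\mathcal{U}$.  Apply the convexity hypothesis with $Y=X+sH$, $Z=X-sH$, and $t=\tfrac12$ to obtain
\begin{equation*}
    p(A,X) \;\preceq\; \tfrac12\bigl(p(A,X+sH) + p(A,X-sH)\bigr),
\end{equation*}
which rearranges, for $0<|s|<\epsilon$, to
\begin{equation*}
   \frac{p(A,X+sH) - 2p(A,X) + p(A,X-sH)}{s^2} \;\succeq\; 0.
\end{equation*}

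Now let $s\to 0$.  Because $p$ is a polynomial, the expression $\mathtt{p}(s):=p(A,X+sH)$ is a (matrix-valued) polynomial in the single real variable $s$, and the left-hand side above is its standard centered second difference, which converges entrywise to $\mathtt{p}''(0)$.  By the very definition of the partial Hessian in Section \ref{subsubsec:def-partial-hessian}, this limit equals $\Hxx p(A,X)[H]$.  Since the cone of positive semi-definite matrices in $\mathbb{S}_n$ is closed, the inequality passes to the limit, yielding $\Hxx p(A,X)[H]\succeq 0$.

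There is essentially no obstacle here; the only point requiring any care is to verify that the centered second difference of the polynomial $\mathtt{p}(s)$ actually converges to the object we have called the partial Hessian, and this is immediate since $\mathtt{p}$ is smooth (in fact polynomial) in $s$, so its Taylor expansion at $0$ makes the convergence exact after finitely many terms.
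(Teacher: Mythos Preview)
Your proof is correct and follows essentially the same approach as the paper's: both use midpoint convexity at $X\pm sH$ to obtain $p(A,X+sH)+p(A,X-sH)-2p(A,X)\succeq 0$, then divide by $s^2$ and let $s\to 0$. The paper writes the difference directly as $s^2\,\Hxx p(A,X)[H]+o(s^4)$ via the Taylor expansion, whereas you phrase it as a centered second difference converging to $\mathtt{p}''(0)$ and invoke closedness of the positive semi-definite cone, but these are the same argument.
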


\begin{proof}
  Fix $X\in\mathcal U$.
  For a given $H\in\gatupn$ and $s\in\mathbb R$ small,
   $X\pm sH\in\mathcal U$. By the convexity hypothesis on $p$,
 \begin{equation*}
   0\preceq (p(A,X+sH)+p(A,X-sH))-2p(A,X) = s^2 \Hxx p(A,X)[H] +
 o(s^4).
 \end{equation*}
  Dividing by $s^2$ and letting $s$ tend to $0$ gives the result.
\end{proof}

  Positivity of the Hessian not only implies
  convexity of $p$, but also imposes further
  serious constraints.

\begin{theorem}
 \label{prop:irr-pos-Hessian}
  Suppose $p(a,x)$ is a symmetric polynomial
  of degree
  $d_a$ and $d_x\ge 2$ in $a$ and $x$ respectively
  and $\mathcal W \subset \ggtup$
  is a domain.   If
 \begin{itemize}
  \item[(i)] $\mathcal W$ is open in $a$; and
  \item[(ii)] for each $n$ and each  $(A,X)\in\mathcal W_n$ and every
    $H\in\gxtupn$,
   \begin{equation*}
      \Hxx p(A,X)[H] \succeq 0,
   \end{equation*}
 \end{itemize}
    then, {\bf either}
 \begin{itemize}
  \item[(A)] there is a nonzero polynomial $q$ of degree
    at most $d_a$ and $d_x-2$ in $a$ and $x$
    respectively, so that $q(A,X)=0$ on $\mathcal W$;  {\bf or}
  \item[(B)] $p$ has degree at most two in $x$, and for each
 $A\in\pi_a(\mathcal W_n)$,
       the function $p(A,X)$ is (globally) convex in $X\in\gtupn.$
 \end{itemize}

    In particular, if $\mathcal W$ is the product
    $\mathcal W =  \gtup \times \mathcal V$,
    for some matrix convex set $\mathcal V=(\mathcal V_n)_n$, then
    {\bf either}
  \begin{itemize}
   \item[($A^\prime$)] there is a polynomial $q$ of degree at most
    $d_a$ and $d_x-2$ in $a$ and $x$ respectively
     so that $q(A,X)=0$ for all $(A,X)\in\mathcal W$;  {\bf or}
   \item[($B^\prime$)] there exists $\Lambda(a,x)$, linear in
     $x$, and $L(a,x)$ a polynomial of degree at most one in $x$ so
 that
    $$
      p(a,x)= L(a,x)+\Lambda(a,x)^T \Lambda(a,x).
    $$
  \end{itemize}
\end{theorem}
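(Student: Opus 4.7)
\begin{Proof}
The plan is to represent the partial Hessian in canonical middle-matrix form
\[
\Hxx p(a,x)[h] = V(a,x,h)^T M(a,x) V(a,x,h),
\]
where $V(a,x,h)$ is a column vector of words in $a,x,h$ linear in $h$ (chosen $\mathbb{R}$-linearly independent as polynomials), and $M(a,x)$ is the uniquely determined symmetric matrix of polynomials of $a$-degree at most $d_a$ and $x$-degree at most $d_x-2$. The argument then proceeds by a ``rank or relation'' dichotomy followed by a degree-tracing step that leverages openness in $a$.

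For each $(A,X)\in\mathcal{W}_n$ I consider the linear map $H\mapsto V(A,X,H)$. Tensoring $(A,X)$ with itself via direct sums (using that $\mathcal{W}$ is closed under direct sums) and varying $H$ in the enlarged space, standard positivity-lifting arguments (as in \cite{HM04}) yield the dichotomy: either (i) the evaluations $V(A,X,H)$ span enough space that $\Hxx p(A,X)[H]\succeq 0$ for every $H$ forces $M(A,X)\succeq 0$, or (ii) there is a nonzero polynomial $q$ of $a$-degree at most $d_a$ and $x$-degree at most $d_x-2$ with $q(A,X)=0$. If the relation branch (ii) holds at every $(A,X)\in\mathcal{W}$, then a standard finiteness / direct-sum argument (the candidate space of $q$'s is finite-dimensional and $\mathcal{W}$ is closed under direct sums, so simultaneous vanishing can be engineered) produces a single $q$ vanishing on all of $\mathcal{W}$; this is alternative (A). Otherwise $M(A,X)\succeq 0$ on a non-empty, relatively open-in-$a$ sub-piece of $\mathcal{W}$.

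To conclude (B) I argue that $M(a,x)$ must be independent of $x$. Suppose for contradiction $M$ has positive $x$-degree and let $M_k(a,x)$ be its top $x$-homogeneous component. Fix $(A_0,X_0)$ in the good sub-piece; by openness in $a$, $(A,X_0)\in\mathcal{W}$ for $A$ in an open set $U\subset\gatupn$, and $M(A,X_0)\succeq 0$ on $U$. Using the direct-sum closure to replace $X_0$ by scaled block-diagonal sums (with one scaling parameter tending to infinity while the others remain bounded) and openness in $a$ to vary the corresponding $A$-blocks freely, the leading $x$-behavior of $M$ forces a sign-indefinite contribution incompatible with $M\succeq 0$ on the blocked-up evaluation, unless $M_k\equiv 0$; induction on $k$ yields $M=M(a)$, so $p$ has $x$-degree at most two. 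Equation (\ref{eq:deg-two}) and the fact that $M(A)\succeq 0$ for $A\in\pi_a(\mathcal{W})$ then show that $p(A,\cdot)$ is globally convex on $\gxtupn$ for such $A$, giving (B).

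The product case ($A'$)/($B'$) follows easily: under $\mathcal{W}=\gtup\times\mathcal{V}$, alternative (A) is already ($A'$); in alternative (B), $\pi_a(\mathcal{W})=\gtup$ gives $M(A)\succeq 0$ for all $A$, and the factorization $M(a)=R(a)^T R(a)$ from \cite{Mlaa} (quoted between Theorems \ref{cor:convexinxforainU} and \ref{cor:convexinxalla}) produces $\Lambda(a,x)=R(a)V(a,x)$, yielding ($B'$). The main obstacle is the degree-tracing step: extracting from open-in-$a$ positivity of the polynomial matrix $M(a,x)$ the collapse of its $x$-degree, while keeping the degree bounds $d_a$ and $d_x-2$ on the obstruction polynomial $q$ sharp, is the technical heart of the argument; the rest of the bookkeeping is routine in the spirit of \cite{HM04}.
\end{Proof}
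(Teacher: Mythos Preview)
The overall shape of your argument --- the dichotomy between a polynomial relation (alternative (A)) and a ``good'' point where enough monomial evaluations are linearly independent, followed by a CHSY/direct-sum argument controlling the negative eigenvalues of the middle matrix --- matches the paper's route through Lemmas \ref{lem:ind-irr-directsum}, \ref{lem:for-free} and Proposition \ref{thm:local-again}, and your treatment of the product case $(A')$/$(B')$ is fine. The genuine gap is in the degree-reduction step.

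Your scaling argument cannot work: $\mathcal W$ is assumed open only in $a$, so there is no mechanism for sending an $x$-scaling parameter to infinity while remaining in $\mathcal W$. Direct sums do not manufacture such freedom, since $(A_0\oplus A_0,\,X_0\oplus tX_0)\in\mathcal W$ would require $(A_0,tX_0)\in\mathcal W$, which is not given; in the extreme, $\mathcal W$ could be a single $x$-slice. Moreover, even if you could establish $M=M(a)$, the implication ``so $p$ has $x$-degree at most two'' is false as stated: your border vector $V(a,x,h)$ carries $x$'s whenever $d_x\ge 3$, so $V^T M(a)V$ can still depend on $x$.

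The paper replaces scaling by a structural observation about the block decomposition $Z=(Z_{ij})$ of the middle matrix according to the $x$-degree of the border monomials. Degree counting shows that the anti-corner block $Z_{0,d_x-2}(a,x)$ is actually a polynomial in $a$ alone (the outer border pieces already use up all $d_x-2$ of the available $x$'s), and it is nonzero precisely because $\degx p=d_x$. Since the diagonal block $Z_{d_x-2,d_x-2}=0$ (again by degree counting), after tensoring with $I_\ell$ the nonzero anti-corner $Z_{0,d_x-2}(A)\otimes I_\ell$ forces at least $\ell$ negative eigenvalues of $Z(A\otimes I_\ell,X\otimes I_\ell)$, contradicting the CHSY bound $\kappa<\ell$ unless $d_x\le 2$. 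Openness in $a$ enters only through Lemma \ref{lem:weakfaith}, to guarantee $Z_{0,d_x-2}(A)\ne 0$ for some $A$ in the available open set; no variation in $x$ is needed. Equivalently, in the order you prefer: once $M(A,X)\succeq 0$ on $\mathcal W$, the vanishing diagonal block $Z_{d_x-2,d_x-2}=0$ forces $Z_{0,d_x-2}(A)=0$ for all $A\in\pi_a(\mathcal W)$, hence identically zero by faithfulness, contradicting $\degx p=d_x$.
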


\begin{proof}See Section \ref{sec:partial-pos}. \end{proof}

  Proposition \ref{thm:local-again}, which isolates
  the role of positivity of the Hessian, is a not too technical,
  but still widely applicable, ingredient
  in the proof of Theorem \ref{prop:irr-pos-Hessian}.

\subsection{Further Results and Organization of the Paper}
    The proofs of the main results turn on
    the {\it border vector-middle matrix}
    representation of the partial Hessian.
    This representation and some
    examples   are discussed
    in the next section, Section \ref{sec:middlematrix}.

    The proofs of Proposition \ref{thm:local-again},
    Theorem \ref{prop:irr-pos-Hessian} and Corollaries
    thereof are in Section \ref{sec:partial-pos}.
    The proofs of the results in Subsubsection
    \ref{subsubsec:main-convexity-results}
   are in Sections \ref{sec:separate-convexity}
   and \ref{sec:concave/convex}.

    A structure result
    for the middle matrix of the partial
    Hessian which is of independent
    interest and will likely be a valuable tool
    in further investigations is presented
    in Section \ref{sec:poly-congruence}
    which can be read following Section \ref{sec:middlematrix}.

\section{The Middle Matrix of the Partial Hessian}
 \label{sec:middlematrix}
  A polynomial $q(a,x)[h]$ in the non-commuting
  variables $a,x$ and $h$ which is homogeneous of
  degree two in $h$ is conveniently represented in
  terms of a Gram-type representation in the form
$$
  q(a,x)[h] = V(a,x)[h]^T Z(a,x)V(a,x)[h],
$$
  where $Z(a,x)$ is a symmetric matrix with polynomial
  entries, and $V(a,x)[h]$ is a vector, called the \emph{border vector},
   whose entries
  have the form $h_j m(a,x)$ for monomials $m$ of
  degree at most the degree of $q$ in $(a,x)$.  Thus, we can index
  the entries of $Z := Z(a,x)$ by the monomials $h_j m(a,x)$.   The
  notation $Z(\cdot , \cdot)$ will be used to denote the matrix
  $Z$, as well as the entries of $Z$.  For example,
  $Z(h_j m(a,x),h_k m'(a,x))$ denotes the entry of $Z$, corresponding
   to entries $h_j m(a,x)$ and $h_k m'(a,x)$ of the border vector.
   Context will make clear which of the two uses of the
   $Z(\cdot,\cdot)$ notation is being employed.

  The matrix $Z=Z(a,x)$ is known as the {\it middle matrix}
  and is unique up to order and presence of zero rows and columns. When
  $q$ is symmetric, so is $Z(a,x)$.

  Of special interest is the case that $p(a,x)$ is
  symmetric and $q(a,x)[h]=\Hxx p(a,x)[h]$. A central
  object in the analysis to follow is the {\it derived}
  middle matrix,
$$
  \mathcal Z(a)= Z(a,0).
$$

Suppose $p(a,x)$ is a polynomial of degree $d_x$ in $x$ and $d_a$ in
$a$.  Let ${Z}$ be the middle matrix for the partial Hessian of $p$.
We take the border vector for ${Z}$ to be of the form
$$V = \left[ \begin{array}{c}V_0 \\
\vdots \\
V_{d_x - 2} \end{array} \right],$$ where $V_j$ is a vector whose
entries consist of words of the form
\begin{equation*}
 \label{eq:borderterm}
  h_{*}m_0x_{k_{1}}m_{1} \cdots m_{j-1}x_{k_{j}}m_{j},
\end{equation*}
 where $m_{0}, \dots ,m_{j}$ are words in $a$ such that $\mbox{deg}_a
\ m_0+ \cdots + \ \mbox{deg}_a \ m_j \le d_a.$  That is,
 $V_j$ captures those monomials in $x$
 and $a$ of total degree at most $d_a$ in
 $a$ and degree exactly $j$ in $x$. Thus $V_j$ is a vector of height
$$
 N_j := g_x^{j+1} \left( \sum_{n_0+ \cdots +n_{j+1} \le d_a} g_a^{n_0+
 \cdots
 +n_{j+1}} \right)
$$
and $Z$ has $(d_x-2)(N_0 + \cdots + N_{d_x-2})$ rows and columns.
With respect to this block form of $V$, we can write $Z$ in block
form $Z = [Z_{ij}]$, for $i,j = 1, \dots , d_x-2$. Thus, $Z_{ij}$ is
that part of $Z$ which corresponds to terms the $V_i$ part of the
border vector on the left and the $V^j$ part on the right.

\subsection{A few simple examples}\label{subsec:examples}

\begin{example}
\label{ex:ax^3a}

Let $p = ax^3a$.  Then $q(a,x)[h] = 2[ahxha + ah^2xa + axh^2a]$,
which is equal to

$$\left[
2\begin{array}{ccccc} h & ah & xh & axh & xah \end{array} \right]
\left[ \begin{array}{ccccc}
0 & 0 & 0 & 0 & 0 \\
0 & x & 0 & 1 & 0 \\
0 & 0 & 0 & 0 & 0 \\
0 & 1 & 0 & 0 & 0 \\
0 & 0 & 0 & 0 & 0 \end{array}\right] \left[ \begin{array}{c}
h \\
ha \\
hx \\
hxa \\
hax \end{array} \right] $$

\end{example}

\begin{example}\label{ex:ax^3plus}
The polynomial $p = ax^3 + x^3a$ has Hessian $2[ahxh + ah^2x + axh^2
+ hxha + h^2xa + xh^2]$ which has the representation

$$\left[
\begin{array}{ccccc}
h & ah & xh & axh & xah
\end{array}
\right] 2 \left[
\begin{array}{ccccc}
0 & x & 0 & 1 & 0 \\
x & 0 & 1 & 0 & 0 \\
0 & 1 & 0 & 0 & 0 \\
1 & 0 & 0 & 0 & 0 \\
0 & 0 & 0 & 0 & 0%
\end{array}
\right] \left[
\begin{array}{c}
h \\
ha \\
hx \\
hxa \\
hax%
\end{array}
\right] $$
\end{example}

\begin{example}\label{ex:x^2axplus}
Let $p= x^2ax + xax^2$.  Then $q(a,x)[h] =2 [hxah + h^2ax+ xhah +
haxh + xah^2 + hahx]$, which equals

$$\left[
\begin{array}{ccccc} h & ah & xh & axh & xah \end{array} \right]
2 \left[ \begin{array}{ccccc}
xa + ax & 0 & a & 0 & 1 \\
0 & 0 & 0 & 0 & 0 \\
a & 0 & 0 & 0 & 0 \\
0 & 0 & 0 & 0 & 0 \\
1 & 0 & 0 & 0 & 0 \end{array}\right] \left[ \begin{array}{c}
h \\
ha \\
hx \\
hxa \\
hax \end{array} \right] $$
\end{example}

Recall that $Z_{00}$ is that part of the middle
matrix which corresponds to those terms of the
border vector with no $x$ variables (both sides). Thus,
in this last example we have
$$\
{Z}_{00} = 2 \left[ \begin{array}{cc}
xa + ax & 0 \\
0 & 0 \end{array}\right],
$$
and likewise
$$
{Z}_{01} = 2 \left[ \begin{array}{ccc}
a & 0 & 1 \\
0 & 0 & 0 \end{array}\right],
$$
and the derived matrix is
$$\mathcal{Z} = \mathcal{Z}(0,a) = 2
\left[ \begin{array}{ccccc}
0 & 0 & a & 0 & 1 \\
0 & 0 & 0 & 0 & 0 \\
a & 0 & 0 & 0 & 0 \\
0 & 0 & 0 & 0 & 0 \\
1 & 0 & 0 & 0 & 0 \end{array}\right]
$$

\subsection{Faithfulness}
  The following standard Lemma says more than  the totality of matrix
  evaluations are faithful on $\Rax$.  Given $g$ and $d$, let
 \begin{equation}
  \label{eq:defNgd}
    N(g,d)=\sum_0^d g^j.
 \end{equation}

 \begin{lemma}
  \label{lem:weakfaith}
    Suppose $p(y)$ is a polynomial of degree $d$ in $g$ variables.
    If $p$ vanishes on an open subset $U\subset \mathbb
 S_{N(g,d)}(\mathbb R^g)$,
    then $p=0$.
 \end{lemma}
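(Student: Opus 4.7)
The plan is to proceed in two steps: first reduce the hypothesis to vanishing on all of $\mathbb S_{N(g,d)}(\mathbb R^g)$, and then exhibit a single tuple $Y \in \mathbb S_{N(g,d)}(\mathbb R^g)$ together with a vector $v$ at which the evaluation map is already injective on polynomials of degree at most $d$. The reduction is immediate: the matrix entries of $p(Y)$ are ordinary (commutative) polynomials in the finitely many free entries of $Y_1,\dots,Y_g$, so if they vanish on a non-empty open subset of $\mathbb S_{N(g,d)}(\mathbb R^g)$ they vanish on the whole space.

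For the second step I would work with a truncated Fock space. Let $\mathcal F$ have orthonormal basis $\{e_w\}$ indexed by words $w$ in $y_1,\dots,y_g$ of length at most $d$, so $\dim\mathcal F = N(g,d)$. Define the creation operator $L_j$ by $L_j e_w = e_{y_j w}$ when $|y_j w|\le d$ and $L_j e_w = 0$ otherwise; its adjoint $L_j^*$ is the annihilation operator $L_j^* e_w = e_{w'}$ when $w = y_j w'$ and $L_j^* e_w = 0$ when $w$ does not begin with $y_j$. Set $Y_j := L_j + L_j^*$; each $Y_j$ is symmetric, so $Y := (Y_1,\dots,Y_g) \in \mathbb S_{N(g,d)}(\mathbb R^g)$. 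Take $v := e_\emptyset$. For a word $w = y_{i_1}\cdots y_{i_m}$ of length $m \le d$, expanding
$$
  w(Y)e_\emptyset = (L_{i_1}+L_{i_1}^*)(L_{i_2}+L_{i_2}^*)\cdots(L_{i_m}+L_{i_m}^*)\,e_\emptyset
$$
into $2^m$ summands, the all-creation summand equals $e_w$, while every other summand contains at least one $L^*$ and, whenever nonzero, produces a scalar multiple of $e_u$ with $|u| = (\#L) - (\#L^*) < m$. Hence $w(Y)e_\emptyset = e_w + \sum_{|u|<m} c_{w,u}\,e_u$.

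Ordering the words by length makes the transition matrix between $\{w(Y)e_\emptyset : |w|\le d\}$ and the basis $\{e_w : |w|\le d\}$ of $\mathcal F$ upper triangular with $1$'s on the diagonal, so these $N(g,d)$ vectors are linearly independent. Consequently $p(Y) = 0$ forces $p(Y)e_\emptyset = \sum_w p_w\,w(Y)e_\emptyset = 0$, which gives $p_w = 0$ for every word $w$ and hence $p = 0$. The one delicate point is the length identity for mixed $L/L^*$ strings acting on $e_\emptyset$: this is a quick induction on $m$ using that $L$ raises the length of a basis vector by one while $L^*$ either lowers it by one or annihilates it, so the only way to reach a basis vector of length $m$ from $e_\emptyset$ in $m$ steps is to apply all creation operators.
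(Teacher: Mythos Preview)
Your argument is correct. The reduction in the first paragraph is standard, and the Fock-space construction in the second step is carried out cleanly: the key point that every mixed $L/L^*$ string of length $m$ applied to $e_\emptyset$ lands (when nonzero) in the span of $\{e_u:|u|<m\}$ is exactly the length-count you give, and it yields the unitriangular transition matrix you need.

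As for comparison: the paper does not supply its own proof here but simply cites Lemma~2.2 of \cite{HM04}. Your write-up is a self-contained version of what is essentially the standard argument behind that citation, namely the truncated left regular (Fock) representation on words of length at most $d$, symmetrized via $Y_j=L_j+L_j^*$. So there is no genuine methodological difference to report; you have filled in the details that the paper outsources.
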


 \begin{proof} See Lemma 2.2 in \cite{HM04}.
\end{proof}

\section{The Structure of Partially Convex Polynomials}
 \label{sec:partial-pos}
 In this section we consider positivity conditions on the partial
 Hessian of a symmetric polynomial $p$ which impose strong constraints
 on the form of $p$.

 We begin with a result which isolates the role of direct
  sums and positivity of the Hessian.

 \begin{proposition}
  \label{thm:local-again}
   Let $p=p(a,x)$ be a given symmetric polynomial of degree
   $d_a$ and $d_x$ in $a$ and $x$ respectively.
   Suppose $n\ge N(g,d)$, $v\in\mathbb R^n$, and that
      $U$ is an open set in $\gtupn.$
   If
 \begin{itemize}
    \item[(i)] the set   $\{m(A,\chi)v: \degx m \le d_x-2 \and \dega m
 \le d_a\}$
     is linearly independent in $\mathbb R^{n}$
      for each $A\in U$; and
    \item[(ii)] for each natural number $\ell$ and $A\in U$,
     $$
       0\le   \langle \Hxx p(A\otimes I_\ell,\chi\otimes
 I_\ell)[H]v,v\rangle
     $$
      for all $H\in\mathcal S_{n\ell}(\mathbb R^{g_x})$,
  \end{itemize}
    then
 \begin{itemize}
  \item[(a)] the degree of $p$ in $x$ is at most  two;
  \item[(b)] the partial Hessian of $p$  takes the form
   $$
    \Hxx p(a,x)[h]= (V(a)[h])^T Z_{0,0}(a) V(a)[h];
   $$
  \item[(c)] $Z_{0,0}(A)\succeq 0$ for $A\in U$; and
  \item[(d)] there is a symmetric polynomial  $L(a,x)$ which
    has degree at most one in $x$ so that
   $$
     p(a,x) = \frac12 V(a)[x]^T Z_{0,0}(a)V(a)[x] + L(a,x).
   $$
    In particular, $p(A,X)$ is (globally) convex in $X\in\gxtupn$ for
 each fixed
   $A\in U$.
  \end{itemize}
 \end{proposition}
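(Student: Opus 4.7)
The plan is to combine the middle matrix representation of the Hessian from Section~\ref{sec:middlematrix} with the direct-sum invariance built into hypothesis (ii). Writing
$$
\Hxx p(a,x)[h] = V(a,x)[h]^T Z(a,x) V(a,x)[h]
$$
with the block structure $V = (V_0,V_1,\dots,V_{d_x-2})^T$ and $Z = [Z_{ij}]$, conclusions (a) and (b) reduce to the assertion that, on $U$, only the $(0,0)$ block of $Z$ is nonzero and that $Z_{00}$ has no $x$-dependence. Once this is in hand, (d) is immediate from equation~\eqref{eq:deg-two} (since a Hessian free of $x$ forces $\deg_x p \le 2$), and (c) comes from feeding into (ii) those $H$'s that activate only the $V_0$ part of the border vector.

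For the block-vanishing step I would fix $A \in U$ and examine, for each $\ell$ and $H \in \mathbb{S}_{n\ell}(\mathbb R^{g_x})$, the action of $V_j(A \otimes I_\ell,\, \chi \otimes I_\ell)[H]$ on test vectors of the form $v \otimes e$. The resulting vectors are linear combinations of tensors $m(A,\chi)v \otimes M_m e$, where $m$ runs over monomials with $\deg_x m = j$ and $\deg_a m \le d_a$, and the $M_m$ are linear in $H$. Hypothesis (i) makes the family $\{m(A,\chi)v\}$ linearly independent in $\mathbb R^n$, so by choosing $H$ to break tensor-product form (for instance $H = h_0\otimes M + h_1\otimes N$ with several distinct summands) and letting $\ell$ grow, one can arrange the $M_m e$ to occupy linearly independent subspaces of $\mathbb R^\ell$. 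Polarizing the quadratic form $Q(H) = \langle V[H]^T Z(A,\chi) V[H](v\otimes e),\, v\otimes e\rangle$ and applying the one-parameter family $H \leadsto H_0 + s H_1$, positivity $Q \ge 0$ for all $s$ splits into separate nonnegativity statements on each diagonal block contribution and forces the off-diagonal cross-terms to vanish. An induction starting from the highest-degree block $(d_x-2,d_x-2)$ and working down then forces every $Z_{ij}$ with $(i,j)\ne (0,0)$ --- and every $x$-degree $\ge 1$ coefficient of $Z_{00}$ --- to vanish pointwise on $U$, and Lemma~\ref{lem:weakfaith} upgrades this to the polynomial identity $Z(a,x) = Z_{00}(a)$.

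With $Z$ reduced to a single $x$-independent block, (b) is established, and (c) follows at once from (ii) by testing against $H$'s supported in the $V_0$ part. Since the Hessian $V_0(a)[h]^T Z_{00}(a) V_0(a)[h]$ is free of $x$, equation~\eqref{eq:deg-two} yields (a) and gives the representation in (d), with $L(a,x)$ absorbing the part of $p$ of $x$-degree at most one; the global convexity of $p(A,\cdot)$ on $\mathbb{S}(\mathbb R^{g_x})$ is then immediate from $Z_{00}(A) \succeq 0$. The principal obstacle is the decoupling step: organizing enough test configurations $(H, \ell, e)$ so that the single positivity of $Q$ splits into distinct positivity and vanishing statements across blocks of $Z$ and across the $x$-degrees of its entries. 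This is exactly where hypothesis (i) --- which guarantees linear independence of the monomial evaluations in $\mathbb R^n$ --- and the freedom to enlarge $\ell$ arbitrarily conspire, supplying the ``extra coordinates'' needed to separate the contributions that would otherwise overlap.
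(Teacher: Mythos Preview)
Your outline diverges from the paper's argument, and the central ``decoupling'' step has a real gap. Polarizing $Q(H_0+sH_1)\ge 0$ yields only the Cauchy--Schwarz bound $B(H_0,H_1)^2\le Q(H_0)Q(H_1)$; it does \emph{not} force cross-terms to vanish unless one of $Q(H_0),Q(H_1)$ is already zero. To exploit that you would need $H_0$ with $V_j[H_0](v\otimes e)=0$ for $j<d_x-2$ while $V_{d_x-2}[H_0](v\otimes e)\ne 0$, \emph{and} you would need the resulting family of vectors $V_{d_x-2}[H_0](v\otimes e)$ (together with the $V_0[H_1](v\otimes e)$ on the other side) to be rich enough to determine the matrix entries of $Z_{0,d_x-2}$. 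Neither is established in your sketch; the second is exactly the content of the CHSY Lemma, which you never invoke. Your block-by-block induction is also not right as stated: for $d_x=4$, once $Z_{0,2}=0$ is known, positivity alone does not force $Z_{1,1}=0$. What actually happens is that $Z_{0,d_x-2}=0$ (as a polynomial identity, via Lemma~\ref{lem:weakfaith}) kills every monomial of $x$-degree $d_x$ in $p$, so the degree in $x$ drops; the correct induction is on $d_x$, not a march through block indices at fixed degree.

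The paper's proof avoids all of this by an eigenvalue count. It invokes the CHSY Lemma (Lemma~\ref{lem:CHSY-in-action}) to show that, under hypothesis (i), the range of $H\mapsto V(\tilde A,\tilde\chi)[H][v]_\ell$ has codimension at most some $\kappa$ \emph{independent of} $\ell$. Positivity of the Hessian on this range then bounds the number of negative eigenvalues of $Z(\tilde A,\tilde\chi)\cong Z(A,\chi)\otimes I_\ell$ by $\kappa$. On the other hand, if $d_x>2$ the corner block $Z_{0,d_x-2}(\tilde A)$ is nonzero of rank at least $\ell$ and sits opposite the zero diagonal block $Z_{d_x-2,d_x-2}=0$, which by a standard inertia argument forces at least $\ell$ negative eigenvalues --- a contradiction once $\ell>\kappa$. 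Thus $d_x\le 2$ in one stroke, after which $Z=Z_{0,0}(a)$ and the same tensor/eigenvalue argument gives $Z_{0,0}(A)\succeq 0$. The CHSY codimension bound is precisely what makes your ``extra coordinates from large $\ell$'' intuition rigorous, and the eigenvalue count replaces the polarization machinery you propose.
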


\def\tA{\tilde{A}}
\def\tchi{\tilde{\chi}}
\def\tv{\tilde{v}}

\begin{proof}
  The partial Hessian can be represented in terms of the middle matrix
  $Z(a,x)$   as
$$
  \Hxx p(a,x)[h]= V(a,x)[h]^T Z(a,x) V(a,x)[h].
$$

  Given $\ell$, let $[v]_\ell$ denote the vector in $\mathbb
 R^{n\ell}=(\mathbb R^n)^\ell$
  with each of its $\ell$ block (of length $n$) entries equal to $v$.
  Since the set $\{m(A\otimes I_\ell,\chi\otimes I_\ell)[v]_\ell: \degx
 m \le d_x-2 \and \dega m \le d_a \}$ is linearly independent, the
  \cite{CHSY} Lemma implies that the subspace $\{V(A\otimes
 I_\ell,\chi\otimes I_\ell)[H][v]_\ell:H\in \mathcal S_{n\ell}(\mathbb R^{g_x})\}$
  has codimension at most $\kappa$ in $\mathbb R^{nN(g,d)}$
 independent
  of $n$. (See the appendix, \S \ref{appendix:CHSY}
  for the details.) Choose $\ell$ so that $\kappa < \ell.$
  With this fixed $\ell$, to simplify the notations, let
  $\tA=A\otimes I_\ell$; $\tchi=\chi\otimes I_\ell$; and
 $\tv=[v]_\ell$.

  It follows that $Z(\tA,\tchi)$ has at most $\kappa<\ell$ negative
 eigenvalues
  for each $A\in U$.
  We now partition $V(a,x)[h]$ and $Z(a,x)$ in blocks by
  the degree of $x$ and let $Z_{jl}(a,x)$ denote the part
  of $Z(a,x)$ corresponding to the terms of degree $j$ and
  $l$ in $x$. In particular, $Z_{0,d_x-2}(a,x)$ represents
  those terms in the Hessian of the form $m_l(a)h_*
 s(a,x)h_*m_r(a,x)$,
  where $m_r(a,x)$ has degree $d_x-2$ in $x$ and where $m_r,m_l,s$
  are monomials. (Here we are assuming the degree in $x$ is $d_x$,
  if it is lower, make the obvious adjustments). It follows that
  $Z_{0,d_x-2}(a,x)=Z_{0,d_x-2}(a)$ is independent of $x$.

  Now $Z_{0,d_x-2}(a)$ is a matrix with  polynomial in $a$ entries.
  Since $n\ge N(g,d)$ and $U$ is an open subset of $\mathbb
 S_{N(g,d)}(\mathbb R^{g_a})$,
  it follows from Lemma \ref{lem:weakfaith} that $Z_{0,d_x-2}(A)\ne 0$
  for some (and hence most) $A\in U$.  Thus, $Z_{0,d_x-2}(\tA)$
  has rank at least $\ell$.  Hence,  from the form of
  $Z$, if $d_x-2>0$, then $Z(A,\chi)$ has at least $\ell$
  negative eigenvalues, contradicting $\kappa<\ell$. We conclude
  that $d_x\le 2$.

  Now that we know the degree of $p$ is two in $x$, the middle
  matrix $Z(a,x)=Z_{0,0}(a)$.  Which gives the advertised
  representation.

  For the last part of the theorem, we argue as above and find
  that $Z(\tA)=Z_{0,0}(\tA)= Z(A)\otimes I_\ell$
  has at most $\kappa <\ell$ negative eigenvalues. From tensor
  product form it follows that $Z(A)$ can have no negative eigenvalues
  as otherwise $Z(\tA)$ has at least $\ell$.   Thus $Z(A)\succeq 0$.

  Since $Z(A)\succeq 0$ and (since $p$ has degree at most two in $x$),
$$
 p(a,x)= V(a)[x]^T Z(a) V(a)[x] + L(a,x)
$$
 it follows that, with $A$ fixed, that $p(a,x)$ is convex
  in $X\in\gxtupn$.
\end{proof}

\begin{remark}\rm
  As seen in the proof, in item (ii) the for every $\ell$ can
  be replaced with some $\ell\ge\kappa + 1$ where $\kappa$ is an
  integer which depends only upon $d$ and $g$.
\end{remark}

In view of the remark and the following
Lemma, Hypothesis (ii) of Theorem \ref{thm:local-again}
is  stronger than needed in that it suffices
to tensor with $I_\ell$ for certain $\ell$,
thereby weakening the need to  tensor with $I_\ell$ at all.

\begin{lemma} (R. Guralnick and L. Small)
 \label{lem:G-S}
  Let $q\ne 0$ be a given (not necessarily symmetric)
  polynomial of degree $d$ in $g$ variables.
  Then there is a sequence of integers $n_k \to \infty$
  for which
 \begin{equation*}
   \{A\in\mathbb S_{n_k}(\mathbb R^g):
   q(A) \mbox{ has full rank }\}
 \end{equation*}
  is open and dense in $\mathbb S_{n_k}(\mathbb R^g).$
\end{lemma}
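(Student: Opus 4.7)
The plan breaks into three pieces. First, for each fixed $n$, the set where $q(A)$ has full rank is exactly the nonvanishing locus of the single polynomial $\det(q(A))$ on the affine space $\mathbb{S}_n(\mathbb{R}^g) \cong \mathbb{R}^{g \cdot n(n+1)/2}$. Hence whenever this set is nonempty it is automatically Zariski-open and Zariski-dense, and therefore dense in the classical topology. The lemma thus reduces to producing an unbounded sequence $n_k$ for which $\det(q(A))$ is not identically zero as a function of $A\in\mathbb{S}_{n_k}(\mathbb{R}^g)$.

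Second, a direct-sum argument converts ``one good $n_0$'' into the whole arithmetic progression $n_k := k n_0$. Indeed, if $A_0 \in \mathbb{S}_{n_0}(\mathbb{R}^g)$ has $q(A_0) \in GL_{n_0}$, then the $k$-fold block diagonal tuple $A_0^{\oplus k} = A_0 \oplus \cdots \oplus A_0$ lies in $\mathbb{S}_{kn_0}(\mathbb{R}^g)$ and satisfies $q(A_0^{\oplus k}) = q(A_0)^{\oplus k}$, which is again invertible. So the problem is really about producing a single good $n_0$.

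Third, and this is the heart of the matter, I would argue the existence of such an $n_0$ via the structure theory of generic matrix algebras. Take $g$ generic symmetric $n \times n$ matrices $X_1, \ldots, X_g$, whose entries on and above the diagonal form a set $T$ of algebraically independent commuting indeterminates. Lemma \ref{lem:weakfaith} already guarantees that $q(X_1, \ldots, X_g)$ is nonzero in $M_n(\mathbb{R}[T])$ as soon as $n \ge N(g,d)$. The structural claim one needs in addition is that, for $n$ sufficiently large, the unital $\mathbb{R}$-subalgebra of $M_n(\mathbb{R}[T])$ generated by the $X_j$ is a prime PI domain whose Ore localization is a division ring (Amitsur, Procesi). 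Since nonzero elements of a division ring are invertible, $q(X)$ is then invertible in $M_n(\mathbb{R}(T))$, which forces $\det(q(X)) \ne 0$ in $\mathbb{R}[T]$; specializing $T$ then exhibits a Zariski-dense open set of $A\in\mathbb{S}_n(\mathbb{R}^g)$ with $q(A)$ invertible.

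The main obstacle is precisely this last structural input: upgrading ``$q$ is nonzero on generic symmetric matrices'' to ``$q$ is invertible in the associated fraction ring.'' Purely elementary perturbation arguments starting from an $A_0$ with $q(A_0)\ne 0$ and taking direct sums with small perturbations give only additive rank lower bounds --- enough to make $\mathrm{rank}(q(A))$ grow linearly with the block count, but not to reach full rank, because direct summing block-diagonalises $q(A)$ without producing rank from off-diagonal couplings. Bridging that gap genuinely requires a non-trivial algebraic input such as the Amitsur--Procesi machinery for generic matrix rings with involution, or some clever substitute adapted to the symmetric setting.
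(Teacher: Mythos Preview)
Your outline is essentially the paper's argument, with the same three-part structure: reduce to nonvanishing of $\det q$ on the affine space of symmetric tuples, then invoke the domain/division-ring property of a generic matrix algebra to force that determinant to be generically nonzero.

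Two refinements are worth noting. First, the paper does not try to show that the algebra generated by generic \emph{symmetric} matrices $X_j$ is itself a domain; instead it writes $X_j = Y_j + Y_j^T$ for generic (not symmetric) matrices $Y_j$ and works inside the algebra generated by $Y_1,\dots,Y_g,Y_1^T,\dots,Y_g^T$. For $n$ a power of $2$ this algebra is known to be a domain with a quotient division ring inside $M_n$ over the rational function field (Berele--Saltman, \cite{BS88}), which is exactly the structural input you flagged as the obstacle. Your phrasing ``the subalgebra generated by the $X_j$ is a prime PI domain'' is the right idea but aimed at the wrong algebra; the trick is to pass to the larger ring with involution, where the result is on record. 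Second, the paper produces its sequence $n_k$ directly as the powers of $2$ large enough that $q$ does not vanish identically on $\mathbb S_n(\mathbb R^g)$ (this nonvanishing it gets from Amitsur's theorem rather than Lemma~\ref{lem:weakfaith}, though either works). Your direct-sum step turning one good $n_0$ into the progression $kn_0$ is correct and pleasant, but once you have the domain property along the powers of $2$ it becomes unnecessary.
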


See Appendix \ref{sec:GS} for a proof.
Likely, the special sequence $n_k$ can be replaced
by: any $n\ge n_0$ for some
 $n_0$ depending only upon
  $g$ and $d$.

  The proof of Theorem \ref{prop:irr-pos-Hessian}
  combines Proposition \ref{thm:local-again} and
  the following two lemmas.

\begin{lemma}
 \label{lem:ind-irr-directsum}
   Suppose $\mathcal W\subset \ggtupn$ is a domain and $d_a$
   and $d_x$ are natural numbers. Either there exists
   an $n$ and $(A,X)\in\mathcal W_n$ and $v\in\mathbb R^n$ such that
 the set
   $\{m(A,X)v : m {\mbox{ is a monomial with }} \degx m \le d_x-2, \
 \dega m \le d_a\}$
   is linear independent, or there is a polynomial $q$ (not
 necessarily
   symmetric) of degree at most $d_a$ and $d_x$ in $a$ and $x$
 respectively
   so that $q(A,X)=0$ for all $(A,X)\in\mathcal W$.
\end{lemma}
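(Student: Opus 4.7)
My plan is to argue the contrapositive: assuming the first alternative fails, I will manufacture a nonzero polynomial $q$ of the advertised degrees that vanishes identically on $\mathcal W$. Let $\mathcal M$ be the (finite) set of monomials in $(a,x)$ with $\deg_a m \le d_a$ and $\deg_x m \le d_x-2$, and for each $n$, $(A,X)\in\mathcal W_n$, and $v\in\mathbb R^n$ define the coefficient-kernel
$$
K(A,X,v) \;=\; \Bigl\{c\in\mathbb R^{\mathcal M} : \sum_{m\in\mathcal M} c_m\, m(A,X)v = 0\Bigr\}\subset\mathbb R^{\mathcal M}.
$$
The failure of (a) is exactly the statement that $K(A,X,v)\neq 0$ for every admissible triple.

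The key algebraic observation is the block-diagonal behavior of monomial evaluation: $m(A\oplus A',X\oplus X')=m(A,X)\oplus m(A',X')$. This forces
$$
K\bigl((A,X)\oplus(A',X'),\,v\oplus v'\bigr)\;=\;K(A,X,v)\cap K(A',X',v'),
$$
and, writing $\mathcal K(A,X):=\bigcap_{v}K(A,X,v)=\{c:\sum_m c_m\, m(A,X)=0 \text{ as a matrix}\}$, also $\mathcal K((A,X)\oplus(A',X'))=\mathcal K(A,X)\cap\mathcal K(A',X')$. Because $\mathcal W$ is closed under direct sums, these intersected kernels are themselves realized at points of $\mathcal W$, hence are nonzero by the failure of (a).

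From here the proof is a two-stage finite-dimensional stabilization in $\mathbb R^{\mathcal M}$. First, fix $(A,X)\in\mathcal W_n$ and intersect across $v$: the $k$-fold self direct sum $(A,X)^{\oplus k}\in\mathcal W_{kn}$ together with the concatenation $v_1\oplus\cdots\oplus v_k$ realizes $\bigcap_i K(A,X,v_i)$, and since $\mathbb R^{\mathcal M}$ is finite-dimensional the resulting descending chain of subspaces stabilizes after finitely many steps, yielding $\mathcal K(A,X)\neq 0$. Second, run the same argument across varying points of $\mathcal W$: the direct sum of any finitely many $(A_i,X_i)\in\mathcal W$ lies in $\mathcal W$ and its $\mathcal K$ equals $\bigcap_i \mathcal K(A_i,X_i)$, so a second stabilization produces a nonzero $c\in\bigcap_{(A,X)\in\mathcal W}\mathcal K(A,X)$. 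The polynomial $q=\sum_{m\in\mathcal M} c_m\, m$ is then nonzero, vanishes on all of $\mathcal W$, and satisfies $\deg_a q\le d_a$ and $\deg_x q\le d_x-2\le d_x$.

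The only real obstacle is bookkeeping: verifying the direct-sum decompositions of $K$ and $\mathcal K$ (an elementary consequence of $m(A\oplus A')=m(A)\oplus m(A')$) and invoking the finite-dimensional descending-chain argument cleanly, once for the $v$-direction and once for the $(A,X)$-direction. Notably, no positivity, openness, or convexity input is used --- only the defining direct-sum closure of a non-commutative domain.
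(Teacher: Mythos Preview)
Your argument is correct: the direct-sum identity $m(A\oplus A',X\oplus X')=m(A,X)\oplus m(A',X')$ gives the intersection formulas for $K$ and $\mathcal K$, closure of $\mathcal W$ under direct sums then forces every finite intersection to be a kernel actually realized at a point of $\mathcal W$ (hence nonzero by the failure of the first alternative), and finite-dimensionality of $\mathbb R^{\mathcal M}$ lets you pass to the full intersection. The paper itself does not supply a proof of this lemma but defers to \cite{DHMiumj}, Lemma~4.1; your argument is the standard one and is almost certainly what appears there. Two incidental remarks: the two stabilization stages can be collapsed into one (run the descending-chain argument directly on the family $\{K(A,X,v)\}$ over all triples), and your $q$ in fact satisfies the sharper bound $\deg_x q\le d_x-2$, which is all that is used downstream.
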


\begin{proof}
  A proof can be found in \cite{DHMiumj}[Lemma 4.1].
\end{proof}

 This Lemma naturally combines with the following simple observation.

\begin{lemma}
 \label{lem:for-free}
   Suppose $\mathcal W \subset \ggtupn$ is a domain and
   and natural numbers $d_a$ and $d_x$ are given. If there
   exists $n$ and $(A,X)\in\mathcal W$ and $v\in\mathbb R^n$
   such that
   $\{m(A,X)v : m {\mbox{ is a monomial with }} \degx m \le d_x-2, \
 \dega m \le d_a\}$
   is linearly independent, then for each $(B,Y)\in\mathcal W$
   and vector $w$ (of the correct size), the set
  $\{m(A\oplus B,X\oplus Y)(v\oplus w)
    : m {\mbox{ is a monomial with }} \degx m \le d_x-2, \ \dega m \le
 d_a\}$
  is linearly independent.
\end{lemma}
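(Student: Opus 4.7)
The plan is short and direct: exploit that $\mathcal W$ is closed under direct sums together with the block-diagonal compatibility of polynomial evaluation. First, because $\mathcal W$ is a domain in the sense used in this paper, we automatically have $(A\oplus B,\, X\oplus Y)\in\mathcal W$ whenever $(A,X),(B,Y)\in\mathcal W$, so the assertion is being made at a legitimate point of $\mathcal W$ and no extra argument is needed to get there.

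The key observation is that for every monomial $m(a,x)$,
$$
   m(A\oplus B,\, X\oplus Y) \;=\; m(A,X)\oplus m(B,Y),
$$
because each letter $a_j$ or $x_j$ acts block-diagonally and products of block-diagonal matrices remain block-diagonal. Applying this identity to $v\oplus w$ gives
$$
   m(A\oplus B,\, X\oplus Y)(v\oplus w) \;=\; m(A,X)v \;\oplus\; m(B,Y)w.
$$

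From here the lemma is immediate. Enumerate the monomials under consideration as $m_1,\dots,m_N$ (those with $\degx m_i\le d_x-2$ and $\dega m_i\le d_a$) and suppose scalars $c_1,\dots,c_N$ satisfy
$$
   \sum_{i=1}^N c_i\, m_i(A\oplus B,\, X\oplus Y)(v\oplus w) \;=\; 0.
$$
Reading off the first $n$ coordinates turns this into $\sum_i c_i\, m_i(A,X)v = 0$, and the hypothesized linear independence of $\{m_i(A,X)v\}$ forces every $c_i$ to vanish.

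There is no real obstacle; the only care needed is to project to the $v$-block rather than the $w$-block, since the independence hypothesis is available only on the $v$-side. The vector $w$ and the tuple $(B,Y)$ play no role beyond producing a direct sum of the correct dimension, which is precisely why the lemma is useful: it lets us enlarge $(A,X)$ by an arbitrary $(B,Y)\in\mathcal W$ while preserving the independence property needed in Proposition~\ref{thm:local-again}.
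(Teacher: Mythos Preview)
Your argument is correct and is exactly the natural proof the paper has in mind; indeed, the paper does not give a proof at all, labeling the lemma a ``simple observation.'' Your block-diagonal computation $m(A\oplus B,X\oplus Y)(v\oplus w)=m(A,X)v\oplus m(B,Y)w$ followed by projection onto the first summand is precisely the intended reasoning.
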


\begin{proof}[Proof of Proposition \ref{prop:irr-pos-Hessian}]
   From Lemma \ref{lem:ind-irr-directsum} either there is
   a (not necessarily symmetric) polynomial $q$
   of degree at most $d_a$ and $d_x-2$ in
   $a$ and $x$ respectively  so that $q(A,X)=0$ on $\mathcal W$, or
   there is an $n$, a pair $(B,Y)\in\mathcal W_n$
   and vector $u\in\mathbb R^n$ so that
   the set $\{m(B,Y)u: \degx m \le d_x-2 \and \dega m \le d_a\}$ is
 linearly independent.

   Given $(C,Z)\in\mathcal W,$
   let $(A^\prime,\chi)=(B\oplus C, Y\oplus X)\in\mathcal W$ and
 $v=0\oplus u$.
   By Lemma \ref{lem:for-free}, the set
   $\{m(A^\prime,X)v: \degx m \le d_x-2 \and \dega m \le d_a\}$ is
 linearly independent.
   In particular, by the open in $a$ hypothesis on
   $\mathcal W$, there is a neighborhood $U$ of $A^\prime$ such that
   for all $A\in U$ the set
   $\{m(A,X)v: \degx m \le d_x-2 \and \dega m \le d_a\}$ is linearly
 independent.
   Since $\mathcal W$ is also a domain on which the Hessian is
 non-negative,
   Theorem \ref{thm:local-again} applies with the conclusion
   that $p$ has degree at most two in $x$ and
 \begin{equation*}
   p(a,x) = \frac12 V(a)[x]^T Z(a) V(a)[x] + L(a,x),
 \end{equation*}
   where $L(a,x)$ has degree at most one in $x$, and
  $Z(A)\succeq 0$ for $A\in U$.  In particular, $Z(B\oplus C)\succeq
 0.$
   Therefore $Z(C)\succeq 0$ and $p(C,X)$ is convex
   (globally) in $X$  for each $C\in \pi_a(\mathcal W)$.

   For the second part of the corollary, note that in the
   absence of the polynomial $q$, the first part of the
   corollary implies that $Z(A)\succeq 0$ for all $A$.
   Thus, by a Theorem in \cite{Mlaa}, $Z(A)$ factors
   as a SoS and the result follows.
\end{proof}

 We close this section by pointing out the following
 simple special cases of Proposition \ref{prop:irr-pos-Hessian}.

\begin{corollary}
 \label{cor:convexinxforainU-again}
  If $p(a,x)$ is convex in $x$ on an open domain $\mathcal W \subset
 \ggtup$, then
$$
  p(a,x)= L(a,x) +V(a,x)^T Z(a) V(a,x),
$$
  where
 \begin{itemize}
   \item[(i)] $Z(A)\succeq 0$ for $A\in\pi_a(\mathcal W)$;
   \item[(ii)] $V(a,x)$ is the border vector, which is linear  in $x$;
 and
   \item[(iii)] $L(a,x)$ has degree at most one in $x$.
  \end{itemize}
\end{corollary}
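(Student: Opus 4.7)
The plan is to reduce the corollary to the previously established machinery: Proposition \ref{prop:convex-hessian} converts convexity in $x$ into pointwise positivity of the partial Hessian, Proposition \ref{thm:local-again} converts that pointwise positivity plus a linear independence condition into the claimed representation, and the direct-sum bootstrap used in the proof of Theorem \ref{prop:irr-pos-Hessian} bridges the two.

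Concretely, applying Proposition \ref{prop:convex-hessian} at each $(A,X) \in \mathcal W$ yields
\[
   \Hxx p(A,X)[H] \succeq 0
\]
for every $H$. Lemma \ref{lem:ind-irr-directsum} then offers a dichotomy: either some nonzero polynomial $q$ of degree at most $d_a$ in $a$ and $d_x - 2$ in $x$ vanishes on $\mathcal W$, or there exist $n$, $(B,Y) \in \mathcal W_n$ and $u \in \mathbb R^n$ so that $\{m(B,Y)u : \degx m \le d_x - 2,\ \dega m \le d_a\}$ is linearly independent. Because $\mathcal W$ is jointly open in $\ggtup$ and closed under direct sums, $\mathcal W_n$ is open and non-empty for arbitrarily large $n$; Lemma \ref{lem:weakfaith} then rules out the first alternative.

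Next, given an arbitrary $(C, \tilde X) \in \mathcal W$, I would form $(A^\prime, \chi) = (B \oplus C, Y \oplus \tilde X) \in \mathcal W$ and $v = 0 \oplus u$. Lemma \ref{lem:for-free} preserves the linear independence, and openness of $\mathcal W$ in $a$ supplies a neighborhood $U$ of $A^\prime$ on which the independence persists. Proposition \ref{thm:local-again} then applies and produces
\[
  p(a,x) = \tfrac12 V(a)[x]^T Z(a) V(a)[x] + L(a,x),
\]
with $L$ of degree at most one in $x$ and $Z(A) \succeq 0$ for every $A \in U$. In particular $Z(B \oplus C) \succeq 0$, and the block-diagonal behavior of polynomial evaluations on direct sums forces $Z(C) \succeq 0$. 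Since $C \in \pi_a(\mathcal W)$ was arbitrary, $Z \succeq 0$ on all of $\pi_a(\mathcal W)$. Setting $V(a,x) := V(a)[x]$ and absorbing the factor $\tfrac12$ into $Z$ or $V$ matches the form claimed in the corollary.

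The main obstacle is verifying that the first alternative of the dichotomy cannot occur; this is the single step that truly exploits the joint openness of $\mathcal W$ (rather than mere openness in $a$), and it is handled by combining direct-sum non-emptiness of $\mathcal W_n$ at large $n$ with the faithfulness Lemma \ref{lem:weakfaith}. Everything else is routine assembly of Propositions \ref{prop:convex-hessian} and \ref{thm:local-again} via Lemmas \ref{lem:ind-irr-directsum} and \ref{lem:for-free}.
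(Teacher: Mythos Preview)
Your proof is correct and follows essentially the same route as the paper: the paper's argument likewise invokes Lemma \ref{lem:weakfaith} to eliminate the vanishing-$q$ alternative, uses convexity for positivity of the partial Hessian, and then defers to the direct-sum bootstrap in the proof of Theorem \ref{prop:irr-pos-Hessian}, which is exactly your assembly of Lemmas \ref{lem:ind-irr-directsum}, \ref{lem:for-free} and Proposition \ref{thm:local-again}. Your write-up is simply more explicit about the individual steps.
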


\begin{proof}
   Note that Lemma \ref{lem:weakfaith} rules out the
   possibility that there is a nonzero polynomial $q$
   such that $q(A,X)=0$ for all $(A,X)\in\mathcal W$.
   The convexity implies that $\Hxx p(A,X) \succeq 0$ for
   all $(A,X)$ in the open domain $\mathcal W$.
   Consequently, the conclusion of the Lemma follows
   from the argument given for the proof of Proposition
   \ref{prop:irr-pos-Hessian}.
\end{proof}

\begin{corollary}
 \label{cor:convexinxalla-again}
  Suppose $\mathcal V\subset \gtup$
  is an open matrix convex set and
  let $\mathcal W= \gtup  \times \mathcal V$. If
  $p(a,x)$ is convex in $x$ on $\mathcal W$,
  then
$$
  p(a,x)= L(a,x) + \Lambda(a,x)^T \Lambda(a,x),
$$
   where
 \begin{itemize}
  \item[(i)] $\Lambda(a,x)$ is a vector and is linear in $x$; and
  \item[(ii)] $L(a,x)$ has degree at most one in $x$.
 \end{itemize}
\end{corollary}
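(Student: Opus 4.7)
My plan is to combine Corollary \ref{cor:convexinxforainU-again}, which is already in hand, with the sum-of-squares factorization theorem from \cite{Mlaa} that was invoked at the end of the proof of Proposition \ref{prop:irr-pos-Hessian}.

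First, I would apply Corollary \ref{cor:convexinxforainU-again} directly to the open product domain $\mathcal W = \gtup \times \mathcal V$, obtaining a representation
$$p(a,x) = L(a,x) + V(a,x)^T Z(a) V(a,x),$$
in which $L$ has degree at most one in $x$, the border vector $V(a,x)$ is linear in $x$, and the middle matrix $Z(A)$ is positive semidefinite for every $A \in \pi_a(\mathcal W)$.

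Next I would argue that $\pi_a(\mathcal W_n) = \gatupn$ for every $n$, so that in fact $Z(A) \succeq 0$ on every $A \in \gatupn$ and every $n$. This is where the matrix convexity hypothesis on $\mathcal V$ enters: if $\mathcal V$ is nonempty at any single level $n_0$, then matrix convexity under isometric compression produces a nonempty $\mathcal V_{n_1}$ for each $n_1 \le n_0$, and closure under direct sums produces nonempty $\mathcal V_n$ for arbitrarily large $n$. For any $A \in \gatupn$, pairing with any $X \in \mathcal V_n$ gives $(A,X) \in \mathcal W_n$, so that $A \in \pi_a(\mathcal W_n)$.

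The main step -- and essentially the only substantive external input -- is then to invoke the factorization theorem of \cite{Mlaa}: a matrix-valued symmetric polynomial that is positive semidefinite on every tuple of symmetric matrices factors as $Z(a) = R(a)^T R(a)$ for some matrix-valued polynomial $R(a)$. Setting $\Lambda(a,x) := R(a)\, V(a,x)$, a column vector that is linear in $x$ since $R$ is independent of $x$ and $V$ is linear in $x$, yields the required decomposition $p(a,x) = L(a,x) + \Lambda(a,x)^T \Lambda(a,x)$. Beyond citing \cite{Mlaa}, I anticipate no genuine obstacle: the nonemptiness/covering argument for the coordinate projection is routine, and everything else follows directly from the previous corollary.
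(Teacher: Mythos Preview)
Your proposal is correct and follows essentially the same route as the paper. The paper's proof simply observes that no nonzero polynomial can vanish on the open domain $\mathcal W$ and then invokes option $(B')$ of Proposition~\ref{prop:irr-pos-Hessian} directly; but the proof of that option is precisely what you spell out---obtain $Z(A)\succeq 0$ for all $A$ via $\pi_a(\mathcal W)=\gtup$, then factor $Z=R^TR$ using \cite{Mlaa} and set $\Lambda=RV$---so the two arguments coincide, with yours making the use of matrix convexity (to guarantee $\mathcal V_n\ne\emptyset$ at every level) more explicit than the paper does.
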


\begin{proof}
  As in the previous corollary, there does not exist
  a nonzero polynomial  $q$ such that $q(A,X)=0$ for
  all $(A,X)\in\mathcal W$. Thus, option $(B^\prime)$
  of Proposition \ref{prop:irr-pos-Hessian} occurs.
\end{proof}

\section{Separate Convexity}
 \label{sec:separate-convexity}
\begin{theorem}
 \label{thm:convexinxanda-again}
 If $p$ is (globally) convex in $a$ and $x$ separately, then there
 exists
 an $m$ and polynomials $L(a,x)$ and  $\Lambda_j(a,x)$,
 $j=1,2,\dots,m$,
 which are degree (at most) one
 in $x$ and $a$ separately (thus at most degree two jointly) such that
$$
  p(a,x)= L(a,x)+\Lambda(a,x)^T \Lambda(a,x).
$$
  Here we have used the shorthand,
  $\Lambda(a,x)^T \Lambda(a,x) = \sum \Lambda_j(a,x)^T
 \Lambda_j(a,x).$
\end{theorem}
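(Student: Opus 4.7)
The plan is to apply Corollary \ref{cor:convexinxalla-again} separately in each variable and then to extract a sharper degree bound from the resulting sum-of-squares decomposition.

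First, because $p$ is globally convex in $x$, Corollary \ref{cor:convexinxalla-again} (taking $\mathcal V = \mathbb S(\mathbb R^{g})$) produces a decomposition
$$
  p(a,x) = L(a,x) + \Lambda(a,x)^T \Lambda(a,x),
$$
in which $\Lambda$ is a column vector that is linear (homogeneous of degree one) in $x$ and $L$ has degree at most one in $x$; in particular, $p$ has degree at most two in $x$. Second, by symmetry, applying the same corollary after swapping the roles of $a$ and $x$ shows that global convexity in $a$ forces $\deg_a p \le 2$. Consequently, the part of $p$ which is homogeneous of degree two in $x$, and which by the first decomposition equals $\Lambda^T \Lambda$, satisfies $\deg_a(\Lambda^T\Lambda) \le \deg_a p \le 2$.

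The remaining step is to upgrade this degree bound on $\Lambda^T \Lambda$ to $\deg_a \Lambda \le 1$. Write $\Lambda = \Lambda^{(0)} + \Lambda^{(1)} + \cdots$, where $\Lambda^{(k)}$ is the part of $\Lambda$ homogeneous of degree $k$ in $a$, and put $M = \max\{\, k : \Lambda^{(k)} \ne 0\,\}$. Expanding
$$
  \Lambda^T \Lambda = \sum_{i,j \ge 0} (\Lambda^{(i)})^T \Lambda^{(j)},
$$
the part of $a$-degree exactly $2M$ receives contributions only from pairs $(i,j)$ with $i+j = 2M$ and $i,j \le M$, hence only from $(i,j) = (M,M)$. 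So the $a$-degree $2M$ piece of $\Lambda^T\Lambda$ equals $(\Lambda^{(M)})^T \Lambda^{(M)}$. If $M \ge 2$, the bound $\deg_a(\Lambda^T\Lambda) \le 2$ forces $(\Lambda^{(M)})^T \Lambda^{(M)} = 0$ as a polynomial. Evaluating at any matrix tuple then gives $\Lambda^{(M)}(A,X)^T \Lambda^{(M)}(A,X) = 0$, hence $\Lambda^{(M)}(A,X) = 0$, and so $\Lambda^{(M)} = 0$, contradicting the choice of $M$. Thus $M \le 1$, and combined with the linearity in $x$ from the first step, $\Lambda$ has degree at most one in each of $a$ and $x$ separately, hence joint degree at most two. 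Setting $L$ as above yields the desired decomposition.

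The main obstacle is the no-cancellation argument for the top $a$-degree part of the non-commutative sum of squares $\Lambda^T\Lambda$. It hinges on the elementary but essential observation that a polynomial vector $f$ with $f^T f = 0$ must itself vanish, which in this free setting follows immediately from evaluating at matrix tuples and using positive semi-definiteness of $f(A,X)^T f(A,X)$.
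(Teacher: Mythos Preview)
Your argument correctly establishes $\deg_a \Lambda \le 1$ (this is essentially Lemma~\ref{lem:convex/degree2} in the paper), but it does not finish the job: the theorem requires \emph{both} $L$ and the $\Lambda_j$ to have degree at most one in each of $a$ and $x$, and your $L$ need not. From Corollary~\ref{cor:convexinxalla-again} the vector $\Lambda$ you obtain is homogeneous of degree one in $x$, so $\Lambda^T\Lambda$ is exactly the part of $p$ that is homogeneous of degree two in $x$; everything of $x$-degree $\le 1$ lands in $L$, and you only know $\deg_a L \le 2$. The simplest counterexample to the final sentence of your proof is $p(a,x)=a_1^2$: your procedure gives $\Lambda=0$ and $L=a_1^2$, which has $a$-degree two. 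More generally, for $p=(a_1+x_1)^2$ your decomposition is $\Lambda=x_1$, $L=a_1^2+a_1x_1+x_1a_1$, whereas the theorem asks for something like $\Lambda=a_1+x_1$, $L=0$.

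The missing idea is that to drive $\deg_a L$ down to one, the sum-of-squares term must also absorb the degree-two-in-$a$, degree-$\le 1$-in-$x$ part of $p$, which forces $\Lambda$ to contain terms of $x$-degree zero. One cannot simply take $L$ ``as above.'' The paper handles this by running Corollary~\ref{cor:convexinxalla-again} in both variables, obtaining two SoS representations with middle matrices $A^TA$ and $(A')^TA'$ that must agree (Lemma~\ref{lem:Aunique}); a partial isometry $U$ with $A'=UA$ then lets one splice the $x$-side vector $AV(a,x)+BV(x)$ together with the $a$-side contribution $U^TB'V(a)$ into a single SoS whose leftover $L$ is genuinely of degree at most one in each variable. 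This splicing step is the substance of the proof, and your argument bypasses it.
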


  We begin with a lemma.

\begin{lemma}
 \label{lem:convex/degree2}
    Suppose $p(a,x)=q(a,x)+\sum r_j(a,x)^T r_j(a,x)$.
    If
  \begin{itemize}
   \item[(i)] each $r_j(a,x)$ is homogeneous of degree one in $x$;
   \item[(ii)] $q(a,x)$ is degree one in $x$;  and
   \item[(iii)] $p(a,x)$ has degree at most two in $a$,
  \end{itemize}
    then each
    $r_j(a,x)$ has degree at most one in $a$ and $q(a,x)$ has
    degree at most two in $a$.
\end{lemma}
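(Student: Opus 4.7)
The plan is to exploit the fact that a non-commutative sum of squares admits no cancellation in its top $a$-degree component, so any $a$-degree bound on the whole forces an $a$-degree bound on each $r_j$ individually.

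First I would observe that since each $r_j$ is homogeneous of degree one in $x$, each $r_j^T r_j$ is homogeneous of degree two in $x$, while $q$ has $x$-degree at most one. Consequently $\sum r_j^T r_j$ is exactly the $x$-homogeneous-degree-two component of $p$. Since $p$ has $a$-degree at most two, the same must hold for $\sum r_j^T r_j$. This is the hook that transfers the hypothesis on $p$ to a hypothesis on the sum-of-squares part alone.

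Next, set $d = \max_j \deg_a r_j$ and decompose $r_j = r_j^{(d)} + \tilde r_j$, where $r_j^{(d)}$ collects the terms of $r_j$ of $a$-degree exactly $d$ and $\tilde r_j$ has $a$-degree at most $d-1$. Because the involution ${}^T$ preserves $a$-degree, the part of $r_j^T r_j$ of $a$-degree $2d$ is precisely $(r_j^{(d)})^T r_j^{(d)}$; the cross terms $(r_j^{(d)})^T \tilde r_j$ and $\tilde r_j^T r_j^{(d)}$ and the square $\tilde r_j^T \tilde r_j$ all have strictly lower $a$-degree. Summing over $j$, the $a$-degree-$2d$ component of $\sum r_j^T r_j$ is $\sum_j (r_j^{(d)})^T r_j^{(d)}$.

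Now I would derive a contradiction from $d \geq 2$. In that case $2d \geq 4 > 2$, so the $a$-degree-$2d$ part must vanish as a polynomial: $\sum_j (r_j^{(d)})^T r_j^{(d)} = 0$. Evaluating at any $(A,X)$ and pairing with any vector $v$ yields $\sum_j \|r_j^{(d)}(A,X)v\|^2 = 0$, forcing $r_j^{(d)}(A,X) = 0$ for all matrix inputs and all $j$; Lemma \ref{lem:weakfaith} then gives $r_j^{(d)} = 0$ as a polynomial for each $j$, contradicting the choice of $d$. Hence $d \leq 1$, so each $r_j$ has $a$-degree at most one, $\sum r_j^T r_j$ has $a$-degree at most two, and $q = p - \sum r_j^T r_j$ inherits $a$-degree at most two. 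The only genuinely substantive step is the no-cancellation argument in the top $a$-degree of the sum of squares; everything else is bookkeeping on degrees.
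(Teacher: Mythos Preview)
Your argument is correct and follows essentially the same route as the paper: separate $q$ from $\sum r_j^T r_j$ by $x$-degree, then use that the top $a$-degree part of a sum of squares cannot cancel to bound $\deg_a r_j$. The paper simply asserts that ``there can't be cancellation of the highest degree term in $a$ in the sum of squares term,'' whereas you spell this out via the decomposition $r_j = r_j^{(d)} + \tilde r_j$, positivity under evaluation, and Lemma~\ref{lem:weakfaith}; so your version is a more fleshed-out rendering of the same idea rather than a different approach.
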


\begin{proof}
  Terms from $q(a,x)$ cannot cancel those from
  $s(a,x)=\sum r_j(a,x)^Tr_j(a,x)$, since the former are
  of at most degree one in $x$ and the later
  homogeneous of degree two in $x$.
  Since $p$ has degree two in $a$ and there can't be cancellation
   of the highest degree term
  in $a$ in the sum of squares term $s$, each $r_j(a,x)$ has
  degree at most one in $a$. Likewise,
  $q(a,x)$ has degree at most two in $a$.
\end{proof}

  Let $\mathcal J$ denote those monomials in $x$ and $a$ which are
 linear
  in each of $x$ and $a$ separately.  Thus $\mathcal J$ has
  $2g^2$ elements.
  Let $V(a,x)$ denote the tautological vector whose entries
  are the monomials from $\mathcal J$.
  Given a $\mathcal J\times \mathcal J$ matrix $M$, the expression,
 \begin{equation}
  \label{eq:s-middle}
    s(a,x)=V(a,x)^T M V(a,x) =\sum_{m,\ell} M_{m,\ell} m(a,x)\ell(a,x)
 \end{equation}
  is then a polynomial which is homogeneous of degree two in
  each of $x$ and $a$ separately.

  If $M=R^T R$, then
\begin{equation}
 \label{eq:s-sos}
    s(a,x) = \sum r_j(a,x)^T r_j(a,x)
\end{equation}
  where $r_j(a,x)= R_j V(a,x)$
  and $R_j$ is the $j$-th row of $R$.

  Conversely, if $s$ has the form in equation (\ref{eq:s-middle}),
  and $r_j =\sum r_j(m) m(a,x)$ (where the sum is over $m\in\mathcal
 J$),
  then $s$ has the form in equation (\ref{eq:s-middle} with
  $M=R^T R,$ where $R$ is the matrix whose $j$-th row has entries
  $R_j(m)=r_j(m)$.

\def\pA{A^\prime}

\begin{lemma}
 \label{lem:Aunique}
   Suppose $s(a,x)$ is homogeneous of degree two in each of $x$
   and $a$ separately.  If $A$ and $A^\prime$ are $n\times \mathcal J$
   and $n^\prime \times \mathcal J$ matrices respectively such that
 \begin{equation*}
   s(a,x) = V(a,x)^T A^T A V(a,x) = V(a,x)^T (\pA)^T \pA V(a,x),
 \end{equation*}
   then there is a partial isometry $U:\mathbb R^n \to \mathbb  R^{n^\prime}$
   such that $A=U\pA$.
\end{lemma}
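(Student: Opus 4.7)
The plan is to split the argument into two clean steps: first, show that the Gram matrix $M = A^T A$ is uniquely determined by $s(a,x)$, so that $A^T A = (A')^T A'$; second, apply the standard construction that produces a partial isometry from equality of Gram matrices.

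For the uniqueness step, the key observation is that each $m \in \mathcal{J}$ is a word of length exactly $2$, consisting of one letter from $a$ and one from $x$. Consequently, for any $m,\ell \in \mathcal J$, the product $m(a,x)\ell(a,x)$ is a word of length $4$, and its unique factorization at position $2$ recovers the ordered pair $(m,\ell)$. Hence the $(2g^2)^2$ words $\{m\ell : m,\ell \in \mathcal J\}$ are pairwise distinct in $\Rax$. Writing
$$
\sum_{m,\ell} (A^T A)_{m,\ell}\, m(a,x)\ell(a,x) \;=\; s(a,x) \;=\; \sum_{m,\ell} ((A')^T A')_{m,\ell}\, m(a,x)\ell(a,x)
$$
and equating coefficients word by word forces $A^T A = (A')^T A'$.

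For the second step, equality of the Gram matrices gives, for every $v \in \mathbb R^{\mathcal J}$,
$$
\|Av\|^2 \;=\; \langle A^T A v, v\rangle \;=\; \langle (A')^T A' v, v\rangle \;=\; \|A'v\|^2.
$$
In particular $\ker A = \ker A'$, and the rule $A'v \mapsto Av$ extends unambiguously to a linear isometry from $\mathrm{range}(A') \subset \mathbb R^{n'}$ onto $\mathrm{range}(A) \subset \mathbb R^{n}$. Extending by $0$ on the orthogonal complement of $\mathrm{range}(A')$ then produces a partial isometry $U$ with $A = UA'$ (here $U$ maps $\mathbb R^{n'} \to \mathbb R^n$, which is what the identity $A = UA'$ dictates dimensionally).

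There is essentially no serious obstacle here. The only non-formal observation is the distinctness of the words $\{m\ell\}$, which is immediate from a length count once one unpacks the definition of $\mathcal J$; everything else is the standard Hilbert-space Gram-matrix construction.
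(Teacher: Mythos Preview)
Your proof is correct and follows essentially the same route as the paper's: first establish $A^TA=(A')^TA'$, then build the partial isometry from the equality of Gram matrices (the paper invokes the Douglas Lemma for this second step, which is exactly your construction). Where the paper simply asserts that $A^TA=(A')^TA'$ is ``readily verified,'' you supply the explicit reason---distinctness of the length-four words $m\ell$ via the factorization at position two---and you also correctly flag the dimensional typo in the statement (the identity $A=UA'$ forces $U:\mathbb R^{n'}\to\mathbb R^n$, and indeed the paper's own proof constructs the isometry in the direction $UA=A'$).
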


\begin{proof}
  It is readily verified that
 \begin{equation}
  \label{eq:ATA}
     A^TA= (\pA)^T \pA.
 \end{equation}
   The existence
  of $U$ now follows from the Douglas Lemma.  For convenience
  of the reader we give the argument in this case.  Define
  $U$ on the range of $A$ into the range of $\pA$ by
  $UAm= \pA m$ (here $m$ is the vector with a $1$ in the $m$-th
  place and $0$ elsewhere). The equality \eqref{eq:ATA} implies
  $U$ unitary and thus extends to a partial isometry (by defining
  it to be $0$ on the orthogonal complement of the range of $A$).
 \end{proof}

   Similarly, a polynomial $p(x)$ which homogeneous of degree two in
 $x$ alone
   (no $a$) can be represented as
 \begin{equation*}
     p(x)= V(x)^T P V(x),
 \end{equation*}
  where $V(x)$ is now the vector with entries $x_j$.  The polynomial
 $p$
  is a sum of squares if and only if $P=R^TR$ for some $R$ (so if and
 only
  if $P$ is positive semi-definite).  In particular, we are using $V$
  in two different ways which should cause no confusion.

\begin{proof}[Proof of Theorem \ref{thm:convexinxanda-again}]
  Since $p$ is convex in $x$, Corollary \ref{cor:convexinxalla}
 implies
  $p$ can be written in the form,
$$
   p(a,x)= {\tt L}(a,x)+ {\tt H}(a,x)^T {\tt H}(a,x),
$$
  where ${\tt H}(a,x)$ is linear in $x$ and ${\tt L}(a,x)$ has degree
 at most
  one in $x$.  Here ${\tt H}$ is a vector with entries ${\tt H_j}$.

  Since $p$ is convex in $a$ it has degree at most two in $a$. Thus,
  Lemma \ref{lem:convex/degree2} says ${\tt H}(a,x)$ has degree
  at most one in $a$ and ${\tt L}(a,x)$ has degree at most two in $a$,
  in addition to the degree restrictions relative to $x$ above.

  Write ${\tt H_j}(a,x)= a_j(a,x)+b_j(x)$ with $a_j(a,x)$ homogeneous
  of degree one in $a$ and $b_j(x)$ a polynomial in $x$ alone.
  Similarly, since ${\tt L}$ has degree at most one in $x$ and
  two in $a$, it can be written as
$$
 {\tt L}(a,x)= C(a,x)+D(a)+E(a,x)
$$
  where $C(a,x)$ is homogeneous of degree two in $a$ and one in $x$;
  $D(a,x)$ is homogeneous of degree two in $a$ (and has no $x$);
  and $E$ has degree at most one in each of $x$ and $a$.

  Now let $A$ and $B$ respectively denote matrices  which produce
  the representations
\begin{equation*}
  \begin{split}
   \sum a_j^T a_j = & V(a,x)^T A^T A V(a,x) \\
   \sum b_j^T b_j =& V(x) B^T B V(x).
 \end{split}
\end{equation*}

  We have,
\begin{equation*}
 \begin{split}
   p(a,x) = &  V(a,x)^T A^TA V(a,x) + [V(a,x)A^T B V(x) + V(x)^T B^T A
 V(a,x)]\\
    &  + C(a,x)
    + V(x)B^T B V(x) +D(a)+E(a,x).
 \end{split}
\end{equation*}
  Note that the term $[\cdot ]$ is the part of $p(a,x)$ which is
 homogeneous of degree two in $x$ and one in $a$;
  whereas $C(a,x)$ is the part homogeneous of degree one in $x$ and two
 in $a$.

\def\pB{B^\prime}
\def\pC{C^\prime}
\def\pD{D^\prime}
\def\pE{E^\prime}

 Reversing the roles of $x$ and $a$, $p$ can also be written as
\begin{equation*}
 \begin{split}
   p(a,x) =  & V(a,x)^T (\pA)^T\pA V(a,x) +
    [V(a,x)(\pA)^T \pB V(a) + V(a)^T (\pB)^T  \pA V(a,x)] \\
    & + \pC(a,x)
     + V(a)(\pB)^T \pB V(a) +\pD(x)+\pE(a,x).
 \end{split}
\end{equation*}
  Note that the term $[\cdot ]$ is the part of $p(a,x)$ which is
 homogeneous of degree two in $a$ and one in $x$;
  whereas $\pC(a,x)$ is the part homogeneous of degree one in $a$ and
 two in $x$.

  Comparing these last two representations we find,
\begin{equation*}
 \begin{split}
   V(a,x)A^T A V(a,x) = & V(a,x) (\pA)^T \pA V(a,x) \\
  V(a,x)A^T B V(x) + V(x)^T B^T A V(a,x) = & \pC(a,x) \\
  V(a,x)(\pA)^T \pB V(a) + V(a)^T (\pB)^T \pA V(a,x) = & C(a,x)\\
  V(a)^T(\pB)^T  \pB V(a)=&D(a).
 \end{split}
\end{equation*}

  From Lemma \ref{lem:Aunique} there is a partial isometry $U$ so that
  $\pA = UA$.  Choose $W$ so that $I-UU^T =W W^T$. Consider,
\begin{equation*}
 \begin{split}
  (AV(a,x)&+BV(x)+U^T\pB V(a))^T  (AV(a,x)+BV(x)+U^T\pB(a)V(a))\\
      &+ (W\pB V(a))^T (W\pB V(a)) \\
        & - [V(a)^T(\pB)^T UB V(x)+ V(x)^TB^T U^T \pB V(a)] + E(a,x)
 \\
    =\ \ & (AV(a,x)+BV(x))^T (AV(a,x)+BV(x)) \\
      & + V(a,x)^T A^T U^T\pB V(a)
              + V(a)^T (\pB)^T UAV(a,x)\\
      &  + V(a)^T B^T UU^T B V(a) + V(a)^T (\pB)^T W^T W \pB V(a) \\
    =\ \ & (AV(a,x)+BV(x))^T (AV(a,x)+BV(x)) \\
       & + [V(a,x)^T (\pA)^T \pB V(a) + V(a)^T (\pB)^T \pA V(a,x)] \\
       &+ V(a)^T B^T B V(a) +E(a,x)\\
    =\ \ & (AV(a,x)+BV(x))^T (AV(a,x)+BV(x)) + C(a,x)+ D(a)+E(a,x) \\
    =\ \ & p(a,x).
 \end{split}
\end{equation*}
\end{proof}

\section{Convex in $x$ and Concave in $a$}
 \label{sec:concave/convex}

\begin{theorem}
 \label{thm:concave/convex-again}
    Suppose $p(a,x)$ is symmetric. If $p$ is convex in $x$ and
    concave in $a$, then there exists linear (homogeneous
    of degree one) polynomials
    $r_j(x)$ and $s_l(a)$ and a polynomial $L(a,x)$ which
    has degree one in both $x$ and $a$ (so joint degree at most two)
    such that
 $$
   p(a,x) = L(a,x) + R(x)^T R(x) - S(a)^T S(a),
 $$
   where $R(x)^T R(x)$ and $S(a)^TS(a)$ are shorthand for the
   sums $\sum r_j(x)^Tr_j(x)$ and $\sum s_j(a)^T s_j(a)$ respectively.
\end{theorem}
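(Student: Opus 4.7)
My plan is to invoke Corollary~\ref{cor:convexinxalla-again} twice---once using convexity of $p$ in $x$, once using convexity of $-p$ in $a$---and then reconcile the two resulting representations by comparing bidegree components. Specifically, convexity of $p$ in $x$ gives
$$
  p(a,x) = L_1(a,x) + \Lambda(a,x)^T \Lambda(a,x),
$$
where $\Lambda$ is a column vector linear in $x$ and $\degx L_1 \le 1$. Applying the same corollary to $-p$ with the roles of $a$ and $x$ interchanged yields
$$
  p(a,x) = -L_2(a,x) - \Gamma(a,x)^T \Gamma(a,x),
$$
with $\Gamma$ linear in $a$ and $\dega L_2 \le 1$. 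In particular, $p$ has degree at most two in each variable separately, so Lemma~\ref{lem:convex/degree2} applied to the first representation forces $\dega \Lambda \le 1$, and symmetrically $\degx \Gamma \le 1$.

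Next I would decompose $\Lambda(a,x) = R(x) + C(a,x)$, where $R(x)$ is the $a$-free part (homogeneous of degree one in $x$) and $C(a,x)$ is the homogeneous bidegree-$(1,1)$ component, and similarly write $\Gamma(a,x) = S(a) + D(a,x)$. The key step is to extract the bidegree-$(2,2)$ part of $p$ from each representation: the first contributes only $C^T C$ (since $L_1$ has $\degx \le 1$ and $R(x)^T R(x)$ is $a$-free, while $R^T C + C^T R$ has $\dega \le 1$), while the second analogously contributes only $-D^T D$. Hence $C^T C + D^T D = 0$ as an identity in $\Rax$. Evaluating on any matrix tuple $(A,X)$ produces a sum of two positive semidefinite matrices equal to zero, forcing $C(A,X) = 0 = D(A,X)$ identically; by Lemma~\ref{lem:weakfaith}, $C = D = 0$.

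With this in hand, $\Lambda = R(x)$ and $\Gamma = S(a)$, so
$$
  p = L_1 + R(x)^T R(x) = -L_2 - S(a)^T S(a).
$$
Setting $L := L_1 + S(a)^T S(a) = -L_2 - R(x)^T R(x)$, the first expression shows $\degx L \le 1$ and the second shows $\dega L \le 1$; hence $L$ has joint degree at most two, and rearranging yields the desired decomposition $p = L + R^T R - S^T S$. The only step requiring real content beyond bookkeeping is the bidegree-$(2,2)$ cancellation in the middle paragraph, where one must verify that no cancellation is possible between the homogeneous-degree-two-in-$x$ piece $\Lambda^T\Lambda$ and the lower-$x$-degree remainder $L_1$, thereby isolating $C^T C$ (and symmetrically $D^T D$) as an honest sum-of-squares polynomial whose vanishing forces $C$ and $D$ to vanish.
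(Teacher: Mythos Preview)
Your argument is correct. The core idea coincides with the paper's: the bidegree-$(2,2)$ component of $p$ is simultaneously a sum of squares (from convexity in $x$) and the negative of a sum of squares (from concavity in $a$), hence zero; once that mixed piece vanishes, the remaining bookkeeping is routine.

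The executions differ slightly. The paper applies the convex-in-$x$ structure theorem once to write $p=\Lambda^T\Lambda+L$, then computes the $a$-Hessian directly: $\Haa p(a,x)[k]=2\Lambda_1(k,x)^T\Lambda_1(k,x)+2L_2(k,x)$, where $\Lambda_1$ and $L_2$ are the degree-one and degree-two parts in $a$. Since this must be negative semidefinite while the first term is positive semidefinite, and the two terms have different $x$-degrees (two versus at most one), the paper concludes $\Lambda_1=0$ and then that $L_2$ is independent of $x$ and equals $-S(a)^TS(a)$. You instead apply the structure corollary symmetrically in both variables and compare the two resulting representations at bidegree $(2,2)$, obtaining $C^TC=-D^TD$ directly. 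Your route is tidier and more symmetric; the paper's route avoids the second invocation of the structure theorem at the cost of an explicit Hessian computation. Both hinge on exactly the same non-cancellation observation you flag in your last paragraph.
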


 \begin{proof}
    Since $p$ is (globally) convex in $x$, it can be written in the
 form,
$$
  p(a,x) = \Lambda(a,x)^T \Lambda(a,x) + L(a,x),
$$
  where $\Lambda$ is linear in $x$ and $L(a,x)$ has degree at most
  one in $x$.  Thus,  $\Lambda(a,x)^T\Lambda(a,x)$ is homogeneous
  of degree two in $x$; whereas $L(a,x)$ had degree at most one in
 $x$.
  In particular, there can be no cancellation between these terms.

   Since $p$ is (globally) concave in $a$, it has degree at most
   two in $a$ and since the terms in $\Lambda(a,x)^T \Lambda(a,x)$
   can not cancel with those in $L(a,x)$, it thus follows that
   $\Lambda(a,x)$ has degree at most one in $a$ and likewise
   $L(a,x)$ has degree at most two in $a$.

   Write,
$$
   L(a,x) = L_0(x)+ L_1(a,x) + L_2(a,x),
$$
  where $L_j$ is homogeneous of degree $j$ in $a$ (and
  degree at most one in $x$).  Similarly, write
   $\Lambda(a,x) = \Lambda_0(x) + \Lambda_1(a,x)$,
  with $\Lambda_j$ homogeneous of degree $j$ in $a$.

  Taking the partial Hessian of $p$ with respect to $a$ gives,
$$
 \Haa p(a,x)[k] = 2[\Lambda_1(k,x)^T \Lambda_1(k,x)
        + L_2(k,x).
$$
  Since $\Haa p(a,x)[k]$ is negative semi-definite (in $k$
  for each $(a,x)$), and since the first term above
  is homogeneous of degree two in $k$ while the second
  has degree at most one in $k$, it follows that
  $\Lambda_1(k,x)=0$ and $L_2(k,x)$ is negative semi-definite.
  Since  $L_2(k,x)$ has degree at most one in $x$ and
  is negative semi-definite, it  does
  not depend on $x$; $L_2(a,x)=L_2(a)$.
  Further, $L_2(a)$ is negative semi-definite
  and hence can be written as $-S(a)^TS(a)$.
 \end{proof}

  We anticipate that many of the results in this section
  and the last section will {\it localize} as the
  following theorem illustrates. By $\|a\|<1$
  we mean the open matrix convex domain with
  $(\{A\in\gtupn : \sum A_j^2 < I_n\})_n \subset \gtup$.

\begin{theorem}
 \label{thm:convex/concave-local}
   Suppose $p(a,x)$ is symmetric. If
  \begin{itemize}
    \item[(i)] $p(a,x)$ is convex in $x$ for $\|a\|<1$;
    \item[(ii)] $p(a,x)\succeq 0$ for $\|A\|<1$ and all $X$; and
    \item[(iii)] $p$ is concave in $a$,
  \end{itemize}
   then $p$ has the form,
$$
  p(a,x)=  W(x)^T (R(a)-Q(a)) W(x),
$$
   where
  \begin{itemize}
   \item[(a)] $W(x)$ is the vector with $g+1$
      entries the  monomials
         $\{\emptyset, x_1,\dots,x_g\}$;
   \item[(b)]  $R(a)$ and $Q(a)$ are symmetric
        matrix polynomials in $a$;
   \item[(c)] $R(a)$ has degree at most one in $a$;
   \item[(d)] $Q(a)$ is homogeneous of degree two and $Q(A)\succeq 0$
 for
    every $A$ (and so $Q(a)$ is a sum of squares); and
   \item[(e)] $R(A)-Q(A)\succeq 0$ for $\|A\|<1$.
  \end{itemize}

    Note that the converse is true too; i.e., if $p$ has
    the form above and (a)-(e) are satisfied, then $p$
    satisfies (i)-(iii).
\end{theorem}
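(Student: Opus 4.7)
\noindent\textit{Proof sketch.}
The plan is to combine hypotheses (i), (ii), and (iii) in the spirit of the proof of Theorem \ref{thm:concave/convex-again}, with care taken to handle the local nature of the convexity in (i).

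First, I would apply Corollary \ref{cor:convexinxforainU-again} to the open domain $\mathcal{W} = \{(a,x) : \|a\| < 1\}$, using hypothesis (i). This yields that $p$ has degree at most two in $x$ and admits the representation
$$p(a,x) = L(a,x) + V(a,x)^T Z(a) V(a,x),$$
where $V(a,x)$ is the border vector (linear in $x$), $L(a,x)$ has degree at most one in $x$, and $Z(A) \succeq 0$ for $\|A\| < 1$. Then, Proposition \ref{prop:convex-hessian} applied to $-p$, together with hypothesis (iii), gives that $p$ has degree at most two in $a$; so $p$ has degree at most two in each variable separately, and its structure is confined to bi-degree pieces $p^{(i,j)}$ with $0 \le i, j \le 2$.

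The central structural step is to argue that the degree-two in $x$ part of $p$ has the form $\sum_{k,\ell} x_k N_{k\ell}(a) x_\ell$ for some symmetric $g\times g$ matrix polynomial $N(a)$ of degree at most two in $a$; equivalently, no $a$-variables appear outside the two $x$'s in any degree-two in $x$ monomial of $p$. I would derive this by analyzing the partial $a$-Hessian together with the middle-matrix representation of the first step: since the degree-two in $a$ part of $p$ is globally negative semi-definite (by concavity) while the middle-matrix decomposition is constrained by $d_a \le 2$, careful bookkeeping of the bi-degrees of the middle matrix should eliminate all ``outside-$a$'' monomials, paralleling the elimination of $\Lambda_1(k,x)$ in the proof of Theorem \ref{thm:concave/convex-again}. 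I expect this to be the main obstacle.

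Once this structural form is in place, the pieces of $p$ of degree zero or one in $x$ (each of degree at most two in $a$) assemble so that
$$p(a,x) = W(x)^T M(a) W(x),$$
where $W(x) = (\emptyset, x_1, \ldots, x_g)^T$ and $M(a)$ is a symmetric $(g+1) \times (g+1)$ matrix polynomial of degree at most two in $a$. I split $M(a) = R(a) - Q(a)$ with $R$ of degree at most one and $Q$ homogeneous of degree two. Global concavity in $a$ forces the degree-two in $a$ part of $p$ to be $\preceq 0$, which gives $Q(A) \succeq 0$ for every $A$, so $Q$ is a sum of squares by \cite{Mlaa}. Finally, hypothesis (ii) gives $W(X)^T M(A) W(X) \succeq 0$ for $\|A\| < 1$ and all $X$; since the vectors $W(X) v$ are sufficiently rich as $X$ and $v$ vary, this forces $M(A) = R(A) - Q(A) \succeq 0$ for $\|A\| < 1$, yielding property (e).
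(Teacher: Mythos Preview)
Your outline matches the paper's in its broad strokes, and your endgame (splitting $M(a)=R(a)-Q(a)$, reading off $Q\succeq 0$ from concavity, and using the density of $\{W(X)v\}$ to get $R(A)-Q(A)\succeq 0$ from hypothesis (ii)) is exactly what the paper does. The gap is precisely where you flagged the ``main obstacle,'' and the parallel you draw to Theorem \ref{thm:concave/convex-again} does not carry over.

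In Theorem \ref{thm:concave/convex-again} the elimination of $\Lambda_1(k,x)$ works because global convexity in $x$ produces a genuine sum-of-squares $\Lambda^T\Lambda$, so the degree-two-in-$x$ contribution to the $a$-Hessian is automatically $\succeq 0$; combined with $\preceq 0$ from concavity it must vanish. Here convexity in $x$ is only local, so Corollary \ref{cor:convexinxforainU-again} gives you $V(a)[x]^T Z(a) V(a)[x]$ with $Z(A)\succeq 0$ only for $\|A\|<1$, not a SoS. Degree bookkeeping alone will not kill the outside-$a$ terms. The paper's mechanism is different: block $Z(a)$ according to the $a$-degree of the border-vector entries as
\[
Z(a)=\begin{pmatrix} M_{00}(a) & M_{01}(a)\\ M_{01}(a)^T & M_{11}\end{pmatrix},
\]
observe that $d_a\le 2$ forces $M_{11}$ to be \emph{constant}, so $Z(A)\succeq 0$ for $\|A\|<1$ gives $M_{11}\succeq 0$. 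Then take the $a$-Hessian, scale $x\mapsto tx$ with $t$ large to isolate the degree-two-in-$x$ piece, and invoke the CHSY lemma (Lemma \ref{lem:CHSY-in-action}) to pass from negativity of the quadratic form to $M_{11}\preceq 0$. Hence $M_{11}=0$, and then $M_{01}=0$ follows from $Z(A)\succeq 0$. This is the missing idea: the \emph{local} positivity of $Z$ is what supplies the $\succeq 0$ side, not any SoS structure.

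There is a second, smaller gap in your assembly step. Knowing the degree-two-in-$x$ part has the form $\sum x_k N_{k\ell}(a)x_\ell$ does not by itself force the degree-one-in-$x$ part of $p$ into $W(x)^T M(a)W(x)$: terms of the shape $a_k x_j a_\ell$ are not of the form $f(a)x_j$ or $x_j g(a)$. The paper disposes of this by bringing in the concavity-in-$a$ decomposition $p=-\Lambda(a,x)^T\Lambda(a,x)+M(a,x)$ with $M$ of degree $\le 1$ in $a$ (Corollary \ref{cor:convexinxalla-again} applied to $-p$), comparing the two representations on the bi-degree-$(2,2)$ piece to pin down the monomial structure of $\Lambda$, and then reading off $M(a,x)=W(x)^T R(a)W(x)$ and $\Lambda^T\Lambda=W(x)^T Q(a)W(x)$ directly. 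You will need this second decomposition, not just the one coming from convexity in $x$.
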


\begin{proof}
  The convexity in $x$ and concavity in $a$ are enough
  to imply that $p$ has degree at most two in each
  of $x$ and $a$ (so at most degree four). Further,
  by convexity in $X$ for $\|A\|<1$, it follows
  from Corollary \ref{cor:convexinxforainU-again} that
$$
  p(a,x)= \frac12 V(a)[x]^T Z(a) V(a)[x] + L(a,x),
$$
  where $Z(A)\succeq 0$ for $\|A\|<1$ and
  $V(x)[h]$ is the border vector (with respect to $x$) for $p(x,a)$,
 which is
  linear in $x$ (homogeneous of degree one) and
  $L(a,x)$ has degree at most one in $x$.  In particular,
  by considering the differing degrees of the $x$ terms,
  there can be no cancellation between the two terms.
  Because of this lack of cancellation, both terms
  have degree at most two in $a$.

  Represent $V(x)[a]= V_0(x) \oplus V_1(x)[a]$ and decompose
  $Z$ with respect to this direct sum as
$$
 Z(a)=\begin{pmatrix} M_{00}(a) & M_{01}(a)\\ M_{01}^T(a) & M_{11}
      \end{pmatrix}.
$$
  Note that $M_{11}$ is constant, $M_{01}(a)$ has degree at most one,
  and $M_{00}(a)$ has degree at most two.   Since $Z(A)\succeq 0$
  for $\|A\|<1$, it follows that $M_{11}\succeq 0$.

  We now take the Hessian of $p$ with respect to $a$,
$$
 \frac12 \Haa p(a,x)[k] = V(x)[k]^T \begin{pmatrix} 2
 M_{00}^{\prime\prime}[k]
       & 2 M_{01}^\prime[k] \\ M_{10}^\prime [k] & 2 M_{11}
 \end{pmatrix} V(x)[k]
            + \Haa L(x)[k].
$$
  Replacing $x$ with $tx$, choosing $t$ large, and using the
  hypothesis that $p(a,x)$ is concave in a, so that $\Haa p(A,X)[K]
 \preceq 0$ for all choices
  of $X,A,K$ it follows that the first term above is negative
  semidefinite; i.e.,
$$
V(X)[K]^T \begin{pmatrix} M_{00}^{\prime\prime}[K]
       & M_{01}^\prime[K] \\ M_{10}^\prime [K] & M_{11}
 \end{pmatrix}V(X)[K]
     \preceq 0
$$
  for all $X,A,K$. Thus, by Lemma \ref{lem:CHSY-in-action} (really
  it may first be necessary to take direct sums), the matrix
 $$\begin{pmatrix} M_{00}^{\prime\prime}[K]
       & M_{01}^\prime[K] \\ M_{10}^\prime [K] & M_{11} \end{pmatrix}
$$
  is negative definite and,
  therefore, $M_{11} \preceq 0$.  However, as noted above, $M_{11}\succeq 0$,
  and thus $M_{11}=0.$

  Since $Z(A)\succeq 0$ for $\|A\|<1$
  and $M_{11}=0$, it follows that $M_{01}=0$, too. We conclude that
  $p$ has the form,
 \begin{equation}
  \label{eqn:vzv1}
  p(a,x)= V(x)^T Z(a) V(x) + L(a,x),
 \end{equation}
   where $V(x)$ is the vector with entries $\{x_1,\dots,x_g\}$;
   $Z(a)$ (is a matrix and has) degree at most two; $Z(A)\succeq 0$
   for $\|A\|<1$; and $L(a,x)$ has degree at most two in $a$ and
   at most one in $x$.

  Coming at things from the other way, the fact that $p(a,x)$ is
 concave
  (globally)in $a$ (for each $x$) implies that
 \begin{equation}
  \label{eqn:vzv2}
   p(a,x)= -\Lambda(a,x)^T \Lambda(a,x) + M(a,x),
 \end{equation}
   where $\Lambda(a,x)$ is linear in $a$ and $M(a,x)$ has degree at
 most one in $a$.
   As usual, the fact that $p$ also has degree at most two in $x$
 implies
   that $\Lambda(a,x)$ has degree at most one in $x$ and $M(a,x)$ has
   degree at most two in $x$.

   Comparing the representations of equations \eqref{eqn:vzv1} and
 \eqref{eqn:vzv2}
   with an eye toward the terms which are homogeneous of degree two
   in each of $a$ and $x$, it follows that $\Lambda(a,x)$ is a linear
   combination of terms of the form $a_j$ and $a_jx_\ell$.
 Consequently,
   there is a matrix-valued $Q(a)$ so that
 \begin{equation*}
   \Lambda(a,x)^T \Lambda(a,x) = W(x)^T Q(a) W(x),
 \end{equation*}
    where $Q(a)$ is a sum of squares, and $W(x)$ is the
   vector (of polynomials) defined in the statement of the Theorem with
   entries $\{\emptyset, x_1,\dots,x_g\}$.

   From equation \eqref{eqn:vzv1}, $p(a,x)$ does not contain monomials
  of
   the form $x_jx_ka_\ell$ (or $a_\ell x_k x_j$). Hence, $M(a,x)$ can
 be written
   as
\begin{equation*}
   M(a,x)= W(x)^T R(a)W(x),
\end{equation*}
   where $R(a)$ has degree at most one.  Thus,
\begin{equation*}
   p(a,x)=W(x)^T(R(a)-Q(a))W(x),
\end{equation*}
   where $R,Q$ satisfy the conditions (a) - (d).

   To complete the proof we need to show $R(a)-Q(a)\succeq 0$ for
 $\|a\|<1$.

  For a given $n$, the set
 \begin{equation*}
  \Gamma_n  = \{W(X)v=\begin{pmatrix} v\\ X_1 v\\ \vdots \\ X_g
 v\end{pmatrix}
          : v\in\mathbb R^n, \ \ X\in\gtupn, \ \ v\in\mathbb R^n\}
 \end{equation*}
   is dense in $\mathbb R^{(g+1)n}$.   To prove this claim we follow
  the route of the CHSY-Lemma (see Appendix \ref{appendix:CHSY}).
 Suppose first that
   $v=e_1$ and let
 \begin{equation*}
   w=\begin{pmatrix} w_1 \\ \vdots \\ w_g \end{pmatrix} \in \mathbb
 R^{gn}
 \end{equation*}
  be given. Letting
 \begin{equation*}
   X_j =\begin{pmatrix}  w_{j1} & w_{j2} & \dots \\
                         w_{j2} &  0 &\dots \\
                         \vdots &  0 & 0 \end{pmatrix},
 \end{equation*}
   so that $X_j=X_j^T$ and $X_j e_1= w_j$, it follows that
$$
\Gamma_n \supset W(X)e_1 =\begin{pmatrix} e_1 \\ w \end{pmatrix}.
$$

  It follows that $\Gamma_n$ contains all vectors of the form,
$$
  \begin{pmatrix} w_0\\w_1 \\ \vdots \\ w_g \end{pmatrix} \in \mathbb
 R^{(g+1)n}
$$
  for which $w_0\ne 0$ and the claim follows.

  Finally, since $p(A,X)\succeq 0$ for $\|A\|<1$ and all $X$, if
 $\|A\|<1$
  and $X\in\gtupn$ and $v\in\mathbb R^n$, then
$$
  0\le p(A,X)v,v\rangle =  \langle (Q(A)-P(A))W(X)v, W(X)v\rangle.
$$
  From the density of $\Gamma_n$ it follows that $Q(A)-P(A)\succeq 0$
\end{proof}

\section{The Polynomial Congruence}
 \label{sec:poly-congruence}
  In this section we establish a polynomial congruence between
  $Z(a,x)$ and $\mathcal Z(a)$ (the derived
  matrix, $\mathcal Z(a)=Z(a,0)$) in the case that
  $Z(a,x)$ is the middle matrix of the partial
  Hessian of  a symmetric polynomial $p(a,x)$.
  The relation is analogous to that found in
  \cite{DHM}

  Suppose $p(a,x)$ is a polynomial of degree $d_x$ in $x$ and $d_a$ in
  $a$.  The partial Hessian of $p$ can be written as a sum of terms of
  the form $m_Lh_Lrh_Rm_R$, where $r = {Z}(m_Lh_L,h_Rm_R)$, and $m_L$
  and $m_R$ are monomials in $x$ and $a$.  We use $(m_Lh_L, h_Rm_R)$
  to index these terms.
  We will further write $m_L$ (and
  similarly $m_R$) in the form
$$
 x_{\ell_{1}}m_{\ell_{1}} \cdots
  x_{\ell_{i}}m_{\ell_{i}},
$$
  where each $m_{\ell_s}$ is a monomial in $a$
  alone. So in particular, here $m_L$ has degree $i$ in $x$.

  Let ${Z} = [{Z}_{ij}]$ be the middle matrix
  for the partial Hessian of $p$.  Here the block structure
  indicated by $Z_{ij}$ is determined by the the degree in
  $x$ of the monomials in the middle matrix per the usual
  convention.

  For each $j = 0, \dots ,d_x - 2$, let $K_j$ be the $N_j \times
  N_{j-1}$ matrix with entries
$$
  K_j(h_{k_{j+2}}m_{k_{j+2}}x_{k_{j+1}}m_{k_{j+1}} \cdots
  x_{k_{1}}m_{k_{1}}, h_{k_{j+1}}m_{k_{j+1}}x_{k_{j}} \cdots
  x_{k_{1}}m_{k_{1}}) = x_{k_{j+2}}m_{k_{j+2}},
$$ and all others being
  $0$.

\begin{lemma}
 \label{lem:recursion}
  Let $p$ be a polynomial of degree
  $d_x \ge 2$ in $x$ and of arbitrary degree in $a$. Let ${Z} =
  [{Z}_{ij}]$ be the middle matrix for the Hessian of $p$. Then
  ${Z}_{i,j+1}K_j + {Z}_{i,j}(a,0) = {Z}_{i,j}$ for $i = 0, \dots,
  d_x-2$, and $j = 0, \dots ,d_x-3$, where $i+j \le d_x-2$.
\end{lemma}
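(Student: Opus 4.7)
The plan is first to reduce, by linearity of both sides in $p$, to the case of a single monomial $\mu = \mu_0 x_{k_1} \mu_1 \cdots x_{k_s} \mu_s$ (with each $\mu_r$ a word in $a$), and then verify the recursion at that level. Only $s \ge 2$ contributes non-trivially to $\Hxx \mu$, so assume this.

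The first step is to enumerate the Hessian monomials coming from $\mu$: for each pair $(p,q)$ with $1 \le p < q \le s$ there is a single term $2\alpha_p h_{k_p} \beta_{p,q} h_{k_q} \gamma_q$, where $\alpha_p, \beta_{p,q}, \gamma_q$ are the prefix (through $\mu_{p-1}$), infix (from $\mu_p$ through $\mu_{q-1}$), and suffix (from $\mu_q$ onward) of $\mu$ determined by cutting at positions $p$ and $q$. Because the two $h$'s occupy fixed positions in each such monomial, the decomposition as a row of $V^T$ times a middle-matrix entry times a column of $V$ is forced, and the contribution is $2\beta_{p,q}$ to $Z_{p-1,\,s-q}(v_i, v_j)$ at $v_i = h_{k_p} \alpha_p^T$ and $v_j = h_{k_q} \gamma_q$.

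With this in hand I would assemble the two sides of the lemma at the level of $\mu$. For fixed $(i,j)$ the unique pair contributing to $Z_{i,j}$ at the relevant position is $(i+1, s-j)$, and the unique pair contributing to $Z_{i,j+1}$ is $(i+1, s-j-1)$ (valid when $i+j \le s-3$). The $(a,0)$ part of $Z_{i,j}$ at the position in question is nonzero only in the boundary case $s = i+j+2$, where $\beta_{i+1, s-j}$ degenerates to $\mu_{i+1}$, which has no $x$.

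The one genuinely subtle step, and what I expect to be the main obstacle, is correctly reading off the $K_j$ matching condition: its definition forces the $h$-index of $v_j$ to agree with the outermost $x$-index of $v_{j+1}$, not with the $h$-index of $v_{j+1}$. Verifying that the $v_{j+1}$ arising from $\mu$, namely $h_{k_{s-j-1}} \gamma_{s-j-1}$, does satisfy this compatibility with $v_j = h_{k_{s-j}} \gamma_{s-j}$ reduces to the decomposition $\gamma_{s-j-1} = \mu_{s-j-1} x_{k_{s-j}} \gamma_{s-j}$, and then yields $K_j(v_{j+1}, v_j) = x_{k_{s-j-1}} \mu_{s-j-1}$. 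The resulting product $(Z_{i,j+1} K_j)(v_i, v_j) = 2 \beta_{i+1, s-j-1} \cdot x_{k_{s-j-1}} \mu_{s-j-1}$ collapses via the tautology $\beta_{i+1, s-j} = \beta_{i+1, s-j-1} \cdot x_{k_{s-j-1}} \mu_{s-j-1}$ to $2\beta_{i+1, s-j}$; combined with the boundary analysis this establishes the recursion on $\mu$, and summing over all monomials of $p$ finishes the proof.
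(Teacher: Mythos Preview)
Your proposal is correct and follows essentially the same approach as the paper: both reduce by linearity to a single monomial and verify the identity entrywise by tracking how a Hessian term $m_L h_L r h_R m_R$ shifts when the right $h$ moves one $x$-position inward. Your prefix/infix/suffix notation $\alpha_p,\beta_{p,q},\gamma_q$ packages the computation a bit more cleanly than the paper's explicit index strings, and you handle the boundary case (monomial degree $s=i+j+2$) in slightly more detail, but the underlying argument is identical.
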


\begin{proof}
  It suffices to prove the result for monomials, since
  it is evidently linear.  Thus, suppose $i+j<d_x-2$
  and the monomial
 $$x_{\ell_{1}}m_{\ell_{1}} \cdots
  x_{\ell_{i}}m_{\ell_{i}}h_{\ell_{i+1}}r
  h_{k_{j+2}}m_{k_{j+2}}x_{k_{j+1}}m_{k_{j+1}} \cdots
  x_{k_{1}}m_{k_{1}}
 $$
  is the $(x_{\ell_{1}}m_{\ell_{1}} \cdots
  x_{\ell_{i}}m_{\ell_{i}}h_{\ell_{i+1}},
  h_{k_{j+2}}m_{k_{j+2}}x_{k_{j+1}}m_{k_{j+1}} \cdots
  x_{k_{1}}m_{k_{1}}$)-term of the Hessian
  (so this is an entry of $Z_{i,j+1}$).  Then the
  $$(x_{\ell_{1}}m_{\ell_{1}} \cdots
  x_{\ell_{i}}m_{\ell_{i}}h_{\ell_{i+1}},
  h_{k_{j+1}}m_{k_{j+1}}x_{k_{j}} \cdots x_{k_{1}}m_{k_{1}})$$ term
  will be
 $$
  x_{\ell_{1}}m_{\ell_{1}} \cdots
  x_{\ell_{i}}m_{\ell_{i}}h_{\ell_{i+1}}
  rx_{k_{j+2}}m_{k_{j+2}} h_{k_{j+1}}m_{k_{j+1}}x_{k_{j}} \cdots
  x_{k_{1}}m_{k_{1}}.
 $$
  In other words,
  ${Z}_{i,j}(x_{\ell_{1}}m_{\ell_{1}} \cdots
  x_{\ell_{i}}m_{\ell_{i}}h_{\ell_{i+1}},
  h_{k_{j+1}}m_{k_{j+1}}x_{k_{j}} \cdots x_{k_{1}}m_{k_{1}})$ equals
 $$
  {Z}_{i,j+1}(x_{\ell_{1}}m_{\ell_{1}} \cdots
  x_{\ell_{i}}m_{\ell_{i}}h_{\ell_{i+1}},
  h_{k_{j+2}}m_{k_{j+2}}x_{k_{j+1}}m_{k_{j+1}} \cdots
  x_{k_{1}}m_{k_{1}})x_{k_{j+2}}m_{k_{j+2}}.
$$

  Now, the $(x_{\ell_{1}}m_{\ell_{1}} \cdots
  x_{\ell_{i}}m_{\ell_{i}}h_{\ell_{i+1}},
  h_{k_{j+1}}m_{k_{j+1}}x_{k_{j}} \cdots x_{k_{1}}m_{k_{1}})$-entry of
  ${Z}_{i,j+1}K_j$ is the product of row
 $$
  x_{\ell_{1}}m_{\ell_{1}}\cdots
  x_{\ell_{i}}m_{\ell_{i}}h_{\ell_{i+1}}
 $$
  of ${Z}_{i,j+1}$ and column
 $$
  h_{k_{j+1}}m_{k_{j+1}}x_{k_{j}} \cdots x_{k_{1}}m_{k_{1}}
 $$
  of $K_j$.  The only nonzero entry of column
  $h_{k_{j+1}}m_{k_{j+1}}x_{k_{j}}m_{k_{j}} \cdots x_{k_{1}}m_{k_{1}}$
  of $K_j$ is $K_j(h_{k_{j+2}}m_{k_{j+2}}x_{k_{j+1}}m_{k_{j+1}} \cdots
  x_{k_{1}}m_{k_{1}}, h_{k_{j+1}}m_{k_{j+1}}x_{k_{j}} \cdots
  x_{k_{1}}m_{k_{1}}) = x_{k_{j+2}}m_{k_{j+2}}.$ Hence, the
  $(x_{\ell_{1}}m_{\ell_{1}} \cdots
  x_{\ell_{i}}m_{\ell_{i}}h_{\ell_{i+1}},
  h_{k_{j+1}}m_{k_{j+1}}x_{k_{j}} \cdots x_{k_{1}}m_{k_{1}})$-entry of
  ${Z}_{i,j+1}K_j$ is
 $$
  {Z}_{i,j+1}(x_{\ell_{1}}m_{\ell_{1}} \cdots
  x_{\ell_{i}}m_{\ell_{i}}h_{\ell_{i+1}}m_{\ell_{i+1}},
  h_{k_{j+2}}m_{k_{j+2}}x_{k_{j+1}}m_{k_{j+1}} \cdots
  x_{k_{1}}m_{k_{1}})x_{k_{j+2}}m_{k_{j+2}},
 $$
  which equals
 $$
  {Z}_{i,j}(x_{\ell_{1}}m_{\ell_{1}} \cdots
  x_{\ell_{i}}m_{\ell_{i}}h_{\ell_{i+1}},
  h_{k_{j+1}}m_{k_{j+1}}x_{k_{j}} \cdots x_{k_{1}}m_{k_{1}}).
 $$
  We conclude that ${Z}_{i,j+1}K_j + {Z}_{i,j}(0) = {Z}_{i,j}$
  whenever
  $i + j < d_x-2$.

   If $i + j = d_x-2$, then ${Z}_{i,j+1} = 0$ and
  ${Z}_{i,j} = {Z}_{i,j}(a,0)$, so that ${Z}_{i,j+1}K_j + {Z}_{i,j}(a,0) =
  {Z}_{i,j}$. Clearly the result  also holds when $i + j > d_x-2$.
\end{proof}

To illustrate, in Example \ref{ex:x^2axplus}, note that
${Z}_{00}(a,x) = {Z}_{01}(a,x) K_1 + {Z}_{00}(0,a)$, where

$$K_1 = \left[ \begin{array}{cc}
x & 0 \\
0 & 0 \\
xa & 0 \end{array}\right].$$

\begin{theorem}
 \label{thm:congruence}
  There is a matrix polynomial $A(a,x)$ so that
$$
  Z(a,x)A(a,x)=Z(a,0).
$$
  Further, $A$ has a square root $B$, so that $B^2=A$,
  which is also a polynomial for which
$$
  B^T(a,x)Z(a,x)B(a,x)=Z(a,0).
$$
  Further, $B$ is invertible (and its inverse is a polynomial).
  In particular, for any $(A,X)\in\ggtup$, $B(A,X)$ is invertible.
\end{theorem}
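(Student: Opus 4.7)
The plan is to read off a formula for $A(a,x)$ directly from Lemma \ref{lem:recursion} and then extract its square root using nilpotence. With $Z(a,x) = [Z_{ij}]$ in its block decomposition by degree in $x$, define $A(a,x)$ to be the block polynomial matrix with $I$ on each diagonal block, $-K_j$ in the $(j+1,j)$ subdiagonal block, and zero elsewhere. The recursion then yields
\[
(ZA)_{ij} \; = \; Z_{ij} - Z_{i,j+1} K_j \; = \; Z_{ij}(a,0),
\]
so $Z(a,x) A(a,x) = Z(a,0)$. Writing $A = I - N$, the matrix $N$ is strictly block lower bidiagonal and hence nilpotent, with $N^{d_x-1} = 0$.

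Because $N$ is nilpotent, the formal binomial expansion
\[
B \; := \; (I-N)^{1/2} \; = \; \sum_{k \ge 0} \binom{1/2}{k}(-N)^k
\]
terminates and produces a polynomial matrix satisfying $B^2 = A$. The matrix $B - I$ is a polynomial in $N$ with no constant term, so it is itself strictly block lower triangular and nilpotent; the Neumann series $B^{-1} = \sum_{k \ge 0} (I-B)^k$ therefore also terminates and furnishes a polynomial inverse. In particular, under any matrix substitution $B$ evaluates to an invertible matrix.

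To verify the congruence $B^T Z B = Z(a,0)$, exploit symmetry: since $p$ is symmetric, both $Z(a,x)$ and $Z(a,0)$ are symmetric, so transposing $ZA = Z(a,0)$ yields $A^T Z = Z(a,0) = ZA$. Subtracting $Z$ from both sides gives $N^T Z = ZN$, whence $(N^T)^k Z = Z N^k$ for every $k$ by induction. Because $B$ is a real-scalar polynomial in $N$ (and so $B^T$ is the same polynomial in $N^T$), this upgrades to $B^T Z = ZB$, and multiplying on the right by $B$ produces $B^T Z B = Z B^2 = ZA = Z(a,0)$. The only real obstacle is organizing the block bidiagonal structure of $A$ so that Lemma \ref{lem:recursion} collapses to $ZA = Z(a,0)$; once this identification is made and the nilpotence of $N$ is observed, the square root, the polynomial inverse, and the congruence are essentially formal consequences.
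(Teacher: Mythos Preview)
Your proposal is correct and follows essentially the same route as the paper: the paper defines the identical block lower-bidiagonal $A(a,x)$, invokes Lemma~\ref{lem:recursion} for $ZA=Z(a,0)$, and takes $B=\sum_{j}\binom{1/2}{j}(A-I)^j$, deferring the remaining verifications to \cite{DHM}. Your write-up actually supplies more detail than the paper does here---in particular the symmetry argument $N^T Z = ZN \Rightarrow B^T Z = ZB \Rightarrow B^T Z B = Z(a,0)$ and the Neumann-series inverse---so nothing is missing.
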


With Lemma \ref{lem:recursion} in place,
the proof of Theorem \ref{thm:congruence} follows along the lines of
 the proof of Theorem
7.3 in \cite{DHM}.  Let $A(a,x)$ denote the matrix
\begin{equation}
A(a,x):= \left[\begin{array}{ccccc}
I&0&\cdots&0&0\\
-K_0&I&\cdots&0&0\\
0&-K_1&\cdots&0&0\\
\vdots&\vdots& &\vdots&\vdots\\
0&0&\cdots&I&0\\
0&0&\cdots&-K_{d_x-3}&I\end{array} \right].
\end{equation}
and observe  $Z(a,x)A(a,x)=Z(a,0)$ follows from Lemma
\ref{lem:recursion}.  For $B(a,x)$, one takes

$$
   B(a,x):= \sum_{j=0}^{d_x-2} \begin{pmatrix} \frac12 \\
 j\end{pmatrix}
   (A(a,x)-I)^j.
$$

\section{Signatures}
 \label{sec:sigs}
 Since the partial Hessian, $\Hxx p(a,x)[h],$ of the symmetric
 polynomial $p$ is symmetric, it has a sum of difference of squares
 (SDS) decomposition,
$$
  \Hxx p(a,x)[h] = \sum_{j=1}^n q_j^T(a,x)[h] q_j(a,x)[h]
   -\sum_{\ell=1}^m r_\ell^T(a,x)[h] r_\ell(a,x)[h],
$$
 where  $q_j, r_\ell$ are linear in $h$.

 The minimum number of positive (resp. negative) squares
 needed in such a decomposition
 is the \emph{positive (resp. negative) signature} of $\Hxx p(a,x)[h]$,
 denoted
 $\sigma_\pm (\Hxx p(a,x)[h])$.  More generally, any symmetric
 polynomial $q(a,x)[h]$ which is homogeneous of degree two in $h$
 has a SDS decomposition (again with factors $q_j, r_\ell$ linear
 in $h$).  Accordingly, we may define the signature $\sigma_\pm(q)$.

 For a symmetric matrix polynomial $Z(y)$ in $y$ and $Y\in\gtupn$
 we let $\mu_\pm (Z(Y))$ denote the number of positive or negative
 eigenvalues of the symmetric matrix  $Z(Y)$.

 The following proposition is a generalization of a result from \cite{DHM}.

\begin{proposition}\label{sig}
 Let $q(a,x)[h]$ be a symmetric polynomial in nc variables
 $(a,x,h)=(a_1, \ldots , a_{g}, x_1,\ldots,x_{g},h_1,\ldots,h_{g})$
 that is of degree $\ell$ in
 $x$ and homogeneous of degree two in $h$ with middle matrix
 $Z(a,x)$. Then
\begin{equation*}
  \mu_\pm(Z(A,X)) \leq n {\sigma_\pm}(q)
\end{equation*}
 for each $n$ and $(A,X)$ in $\ggtupn.$
\end{proposition}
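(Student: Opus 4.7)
The plan is to convert a minimal SDS decomposition of $q$ into a polynomial matrix factorization $Z(a,x) = \Gamma^T\Gamma - \Delta^T\Delta$, and then extract the eigenvalue bound at a fixed $(A,X)$ from an elementary rank/inertia inequality. Fix a minimum-length SDS decomposition
\begin{equation*}
q(a,x)[h] = \sum_{j=1}^{N_+} q_j(a,x)[h]^T q_j(a,x)[h] - \sum_{\ell=1}^{N_-} r_\ell(a,x)[h]^T r_\ell(a,x)[h]
\end{equation*}
with $N_\pm = \sigma_\pm(q)$ and each $q_j, r_\ell$ homogeneous of degree one in $h$. Every monomial appearing in $q_j$ has exactly one $h$-letter, so it factors uniquely as $m_L \cdot (h_k m_R)$ for monomials $m_L, m_R$ in $(a,x)$. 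The tails $h_k m_R$ lie among the entries of a sufficiently enlarged border vector $V = V(a,x)[h]$. Grouping the monomials of $q_j$ by their tails produces a representation $q_j = \gamma_j(a,x) V$, where $\gamma_j$ is a row vector of polynomials in $(a,x)$; similarly $r_\ell = \delta_\ell(a,x) V$.

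Using $(m_L h_k m_R)^T = m_R^T h_k m_L^T$ one checks that $q_j^T q_j = V^T (\gamma_j^T \gamma_j) V$ and $r_\ell^T r_\ell = V^T (\delta_\ell^T \delta_\ell) V$. Letting $\Gamma(a,x)$ (resp.\ $\Delta(a,x)$) be the polynomial matrix whose rows are the $\gamma_j$ (resp.\ $\delta_\ell$), summation yields
\begin{equation*}
q(a,x)[h] = V^T \bigl(\Gamma(a,x)^T \Gamma(a,x) - \Delta(a,x)^T \Delta(a,x)\bigr) V.
\end{equation*}
By the uniqueness of the middle matrix up to order and presence of zero rows and columns, $Z(a,x) = \Gamma(a,x)^T \Gamma(a,x) - \Delta(a,x)^T \Delta(a,x)$, after possibly padding $Z$ with zero rows and columns to account for any border-vector entries beyond the canonical ones for $q$. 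This padding contributes only zero eigenvalues and does not affect $\mu_\pm$.

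Fix $(A,X) \in \ggtupn$. The evaluated matrix $\Gamma(A,X)$ has $N_+ n$ rows and $\Delta(A,X)$ has $N_- n$ rows, so the positive semidefinite matrices $\Gamma(A,X)^T \Gamma(A,X)$ and $\Delta(A,X)^T \Delta(A,X)$ have ranks at most $n N_+$ and $n N_-$ respectively. It remains to invoke the standard inertia inequality: if a real symmetric matrix decomposes as $M = P - N$ with $P, N$ positive semidefinite, then $\mu_+(M) \leq \mathrm{rank}(P)$ and $\mu_-(M) \leq \mathrm{rank}(N)$ — indeed, any subspace on which $M$ is positive definite must meet $\ker P$ trivially, and dually for $\mu_-$. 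Applied with $M = Z(A,X)$, this gives $\mu_\pm(Z(A,X)) \leq n \sigma_\pm(q)$ as claimed.

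The step that requires the most care is the second paragraph: verifying that regrouping the SDS factors by the border vector really produces a Gram decomposition of $Z$ itself (up to zero padding), rather than of some strictly larger middle matrix. Everything else is either direct manipulation of monomials or a routine linear algebra argument.
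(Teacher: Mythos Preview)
Your proof is correct and follows essentially the same approach as the paper's: write a minimal SDS decomposition, express each squared factor in Gram form against the border vector so that the middle matrix becomes a difference $\sum \Phi_j\Phi_j^T - \sum \Psi_\ell\Psi_\ell^T$ of rank-one (as polynomial matrices) terms, and then bound $\mu_\pm(Z(A,X))$ by the rank of the evaluated positive (resp.\ negative) part. Your treatment is slightly more explicit about the zero-padding needed to reconcile border vectors and about the inertia inequality $\mu_+(P-N)\le\mathrm{rank}(P)$, but the substance is identical.
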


\begin{proof}
 Suppose
\begin{equation*}
 q(a,x)[h] = \sum_{j=1}^{\sigma_+}
  f_j^T f_j - \sum_{j=1}^{\sigma_-} g_j^T g_j,
\end{equation*}
 where the $f_j$ and $g_\ell$ are linear in $h$,
 is a SDS
 decomposition with the minimum number of positive squares. Then
$$
 f_j^T(a,x)[h] f_j(a,x)[h] =  V(a,x)[h]^T F_j(a,x) V(a,x)[h]
$$
and
$$
  g_j^T(a,x)[h] g_j(a,x)[h] =  V(a,x)[h]^T G_j(a,x) V(a,x)[h]\,,
$$
 where each of the matrix polynomials $F_j(a,x)$ and $G_j(a,x)$ on
 the right is of the form
\begin{equation*}
 F_j(a,x)=\Phi_j(a,x)\Phi_j(a,x)^T \quad \textrm{and}\quad G_j(a,x) =
  \Psi_j(a,x) \Psi(a,x)^T\,,
\end{equation*}
 for vectors  $\Phi_j$ and $\Psi_j$ whose entries are polynomials of
 degree less than or equal to $\ell$ and the border vector
 $V(a,x)[h]$ is linear in $h$.

  Since
$$
 q(a,x)[h] = V(a,x)[h]^T ( \; \sum_{j=1}^{\sigma_+} F_j(a,x)
   -\sum_{j=1}^{\sigma_-} G_j(a,x) \; ) V(a,x)[h],
$$
  and for a given $q$ the middle matrix is unique, once the border
  vector is fixed, it follows that
$$
 Z(a,x) = \sum_{j=1}^{\sigma_+} F_j(a,x) - \sum_{j=1}^{\sigma_-}
   G_j(a,x).
$$
  Since the rank of $F_j(A,X)$ is at most $n$, it follows that $Z(A,X)$
  has at most $n\sigma_+^{min}$ positive eigenvalues.
\end{proof}

\begin{corollary}\label{cor:sig}
  Suppose $q$ is the partial Hessian of a symmetric
  polynomial of degree $d_x \ge 3$ in $x$.
  If $\sigma_\pm(q) \le 1$, then
  \begin{itemize}
  \item[(i)] $\sigma_\pm =1$;
  \item[(ii)]
 $$
   \sup_n\left\{\frac{\mu_\pm(Z(A,0))}{n}: A \in \gatupn
   \right\}=\sigma_\pm(q).
  $$
\end{itemize}
\end{corollary}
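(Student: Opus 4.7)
For part (i), my plan is a contradiction argument based on Theorem \ref{prop:irr-pos-Hessian}. Suppose $\sigma_+(q)=0$. Then $q$ is a pure sum of negative squares, so the partial Hessian $\Hxx p(A,X)[H]=q(A,X)[H]$ is negative semidefinite at every matrix substitution. Equivalently, $-p$ has positive semidefinite partial Hessian on the domain $\mathcal W=\ggtup$, which is trivially open in $a$. By Lemma \ref{lem:weakfaith}, no nonzero polynomial vanishes on all of $\ggtup$, so alternative (A) of Theorem \ref{prop:irr-pos-Hessian} is excluded; alternative (B) then forces $-p$, and so $p$, to have degree at most two in $x$. This contradicts $d_x\ge 3$, so $\sigma_+(q)\ge 1$, and together with the hypothesis $\sigma_+(q)\le 1$ we obtain $\sigma_+(q)=1$. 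Replacing $p$ by $-p$ gives the statement for $\sigma_-$.

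For part (ii), the inequality
\[
  \sup_n\frac{\mu_\pm(\mathcal Z(A))}{n}\le \sigma_\pm(q)
\]
follows from Proposition \ref{sig} specialized to $X=0$. For the reverse inequality the plan is to reduce to $Z(A,X)$ via the polynomial congruence of Theorem \ref{thm:congruence}: since $B(a,x)$ is polynomial and $B(A,X)$ is invertible at every matrix evaluation, congruence preserves signature, giving $\mu_\pm(\mathcal Z(A))=\mu_\pm(Z(A,X))$ for every $(A,X)\in\ggtupn$. Hence
\[
 \sup_n\frac{\mu_\pm(\mathcal Z(A))}{n} \;=\; \sup_{n,A,X}\frac{\mu_\pm(Z(A,X))}{n},
\]
and it suffices to exhibit an evaluation with $\mu_+(Z(A,X))=n$. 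Writing the minimal SDS as $q=f^Tf-\sum_\ell g_\ell^Tg_\ell$ (so $\sigma_+=1$) yields the corresponding middle-matrix decomposition $Z(a,x)=\Phi(a,x)\Phi(a,x)^T-\sum_\ell\Psi_\ell(a,x)\Psi_\ell(a,x)^T$ from the proof of Proposition \ref{sig}, and I would first use the Guralnick--Small result (Lemma \ref{lem:G-S}), applied to an appropriate scalar polynomial derived from $\Phi$ (for example a maximal minor or Gram determinant), to locate an evaluation where $\Phi(A,X)$ has full column rank $n$, so that $\Phi(A,X)\Phi(A,X)^T$ has rank exactly $n$.

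The main obstacle is then verifying that the subtraction of $\sum_\ell\Psi_\ell(A,X)\Psi_\ell(A,X)^T$ cancels none of these $n$ positive eigenvalues at such an evaluation. My plan is to argue by minimality of $\sigma_+$: if such cancellation occurred at every full-rank evaluation, then a polynomial identity (obtained by promoting the uniform inequality via the faithfulness Lemma \ref{lem:weakfaith}) would let us absorb $f$ into a recombination of the $g_\ell$, rewriting the SDS with zero positive squares and contradicting part (i). The $\sigma_-$ case is handled identically after replacing $p$ with $-p$.
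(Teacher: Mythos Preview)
Your argument for part (i) is essentially the paper's: if $\sigma_+(q)=0$ then $-q$ is a sum of squares, so the Hessian of $-p$ is positive semidefinite everywhere, and the structure results force $d_x\le 2$, a contradiction.

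For part (ii) there is a genuine gap in your final step. Having found $(A,X)$ with $\Phi(A,X)$ of full column rank $n$ tells you only that the positive part $\Phi\Phi^T$ has rank $n$; it says nothing about how many positive eigenvalues survive after subtracting $\sum_\ell\Psi_\ell\Psi_\ell^T$. Your proposed rescue --- that uniform cancellation at all full-rank points would, via Lemma \ref{lem:weakfaith}, yield a polynomial identity allowing $f$ to be absorbed into the $g_\ell$ --- does not go through. The condition $\mu_+(Z(A,X))<n$ is not the vanishing of a fixed polynomial in the entries of $(A,X)$, so faithfulness does not apply; and even if one could extract some algebraic relation, there is no mechanism here that converts it into an SDS with zero positive squares. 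Minimality of $\sigma_+$ is a statement about nonexistence of certain decompositions, not a tool for producing them from rank inequalities. The detour through the congruence $B^T Z B=\mathcal Z$ is correct but buys nothing: it merely restates the problem, since $\mu_\pm(Z(A,X))=\mu_\pm(\mathcal Z(A))$ already.

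The paper avoids this obstacle entirely by exploiting the block structure of $\mathcal Z(a)=Z(a,0)$ rather than any particular SDS. Because $p$ has degree $d_x\ge 3$ in $x$, the off-diagonal block $Z_{0,d_x-2}(a,0)$ is a nonzero matrix polynomial in $a$ alone; Guralnick--Small then produces an $A$ at which this block has rank at least $n$. The middle matrix $\mathcal Z(A)$ is symmetric and \emph{upper anti-triangular} in its block decomposition (that is, $Z_{i,j}(a,0)=0$ for $i+j>d_x-2$), and a symmetric matrix with this shape whose extreme off-diagonal block has rank $r$ automatically has at least $r$ positive and $r$ negative eigenvalues (this is Corollary 5.4 of \cite{DHM}). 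Hence $\mu_+(\mathcal Z(A))\ge n$, which together with Proposition \ref{sig} gives the equality. No control over cancellation between positive and negative squares is needed.
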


\begin{proof}
   If $\sigma_+(q)=0$, then $q$ is the negative of a sum of squares, so
   that earlier results then imply that
   the degree of $p$ is two.    Thus,
   $\sigma_+(q)=1$.
   In this case,
   in view of the previous proposition, it suffices to prove
   that there is an $n$ and an $A\in\gtupn$ such that
   $\mu_+(Z(A,0))\ge n$.
   The $Z_{0,d_x-2}(a,x)=Z_{0,d_x-2}(a,0)$
   block of the middle
   matrix of the Hessian of $p$
   is not zero. It therefore
   has an entry (NC polynomial)
   which is not zero  and therefore, by the Guralnick-Small
   Lemma (Lemma \ref{lem:G-S}), there is an $n$
   and an $A \in \mathbb{S}_n(\mathbb R ^g)$ so
   that $Z_{0,d-2}(A,0)$ has rank at least $n$. Since the middle
   matrix is symmetric and {\it upper anti-diagonal} and the upper right
   has rank $n$, it has at least $n$ positive eigenvalues (Corollary 5.4 in
   \cite{DHM}).  This gives item (ii).
\end{proof}

\begin{conjecture} Suppose $q$ is the partial Hessian of a symmetric
 polynomial.
 Then $$\sup_n\left\{\frac{\mu_\pm(Z(0,A))}{n}: A \in \gatupn
 \right\}=\sigma_\pm(q).$$
\end{conjecture}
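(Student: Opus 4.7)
The plan is to combine the polynomial congruence of Theorem \ref{thm:congruence} with the signature bound of Proposition \ref{sig} and a Guralnick--Small genericity argument, in order to upgrade Corollary \ref{cor:sig} (which handles the case $\sigma_\pm(q) \le 1$) to arbitrary signature. Note that the conjecture as written involves $Z(0,A)$; consistent with the paper's own convention $\mathcal Z(a) = Z(a,0)$ and with Corollary \ref{cor:sig}, I interpret this as $Z(A,0)$.

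The first and cleanest reduction uses the polynomial congruence: Theorem \ref{thm:congruence} provides a polynomial matrix $B(a,x)$, invertible at every evaluation $(A,X)$, with $B(a,x)^T Z(a,x) B(a,x) = Z(a,0)$. Evaluating at $(A,X)$ and applying Sylvester's law of inertia gives $\mu_\pm(Z(A,X)) = \mu_\pm(Z(A,0))$, so that
\begin{equation*}
\sup_n \sup_{A \in \gatupn} \frac{\mu_\pm(Z(A,0))}{n}
= \sup_n \sup_{(A,X) \in \ggtupn} \frac{\mu_\pm(Z(A,X))}{n}.
\end{equation*}
Combined with Proposition \ref{sig}, this immediately yields the $\le$ direction of the conjecture; all of the remaining work lies in the reverse inequality.

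For the $\ge$ direction, I would fix a minimal SDS decomposition $q = \sum_{j=1}^{\sigma_+} f_j^T f_j - \sum_{\ell=1}^{\sigma_-} g_\ell^T g_\ell$, which, as in the proof of Proposition \ref{sig}, induces a polynomial-matrix identity $Z(a,x) = \sum_j \Phi_j \Phi_j^T - \sum_\ell \Psi_\ell \Psi_\ell^T$. The strategy is to argue by contradiction: if $\mu_+(Z(A,0)) \le kn$ for some $k < \sigma_+$ at every $(n,A)$, then generically the $\Phi_j$'s cannot be ``independent modulo the $\Psi_\ell$'s,'' and one should be able to extract a strictly shorter SDS decomposition of $q$, contradicting minimality. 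Concretely, one picks a nonzero polynomial built from a maximal minor of $[\Phi_1\ \cdots\ \Phi_{\sigma_+}]$ (or of its appropriate projection mod the span of the $\Psi_\ell$'s), applies the Guralnick--Small Lemma \ref{lem:G-S} to produce a sequence $n_k \to \infty$ and tuples $A_{n_k}$ at which this minor has full rank, and then reads off $\mu_+(Z(A_{n_k},0))/n_k \to \sigma_+$ in the spirit of the proof of Corollary \ref{cor:sig}.

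The main obstacle, and almost certainly the reason the statement appears here as a conjecture rather than a theorem, is the compression step: converting uniform rank deficiency of the evaluations $Z(A,X)$ into an algebraic reduction of $\sigma_+$ inside the free $*$-algebra $\Rax$. In the scalar case $\sigma_\pm \le 1$ of Corollary \ref{cor:sig} this compression is vacuous -- a single positive square cannot be shortened, and the Guralnick--Small argument applied to the corner block $Z_{0, d_x-2}(a,0)$ suffices. For $\sigma_+ \ge 2$ the compression must take place modulo sums of squares of \emph{polynomial vectors}, and I expect it to require either the conjectural strengthening of Lemma \ref{lem:G-S} speculated after its statement (full-rank density for every large $n$, not merely a subsequence), or an extension of the structural congruence of Section \ref{sec:poly-congruence} showing that a minimal SDS decomposition is already detected by the rank stratification of the derived middle matrix $\mathcal Z(a)$.
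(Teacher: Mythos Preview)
The statement is presented in the paper as a \emph{Conjecture}; there is no proof in the paper to compare against. Your write-up correctly recognizes this and does not claim to give a complete argument.

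Your treatment of the easy direction is sound. In fact the inequality $\mu_\pm(Z(A,0))/n \le \sigma_\pm(q)$ follows immediately from Proposition~\ref{sig} by specializing $X=0$, so the congruence of Theorem~\ref{thm:congruence} is not strictly needed there. Your use of the congruence does, however, yield the pleasant side observation that $\mu_\pm(Z(A,X)) = \mu_\pm(Z(A,0))$ for \emph{every} $(A,X)$, so that nothing is lost by restricting to $X=0$; this is worth recording in its own right.

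Your discussion of the hard direction accurately identifies the gap. The mechanism that works in Corollary~\ref{cor:sig} is special to $\sigma_\pm \le 1$: a single nonzero entry of $Z_{0,d_x-2}(a,0)$ plus the Guralnick--Small lemma forces rank at least $n$ in the off-diagonal corner, and the anti-upper-triangular shape then gives $n$ eigenvalues of each sign. For $\sigma_+ \ge 2$ one would need, roughly, $\sigma_+$ many ``independent'' such rank witnesses, and the step you flag --- passing from uniform rank deficiency of evaluations back to a shorter SDS decomposition in $\Rax$ --- is exactly the missing ingredient. The paper offers no tool for this compression beyond what you cite, so your assessment that the statement remains open, and that a resolution would likely require either the strengthened form of Lemma~\ref{lem:G-S} or further structural information about $\mathcal Z(a)$, matches the state of affairs in the paper.
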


\section{Appendix A. The [CHSY] Lemma}
 \label{appendix:CHSY}
 At the root of the \cite{CHSY} Lemma is the following
\begin{lemma}
 \label{lem:CHSY-start}
  Fix $n>d.$ If $\{z_1,\dots,z_d\}$ is a linearly independent
  set in $\mathbb R^n$, then the codimension of  \begin{equation*}
   \left\{ \begin{pmatrix} Hz_1 \\ Hz_2 \\ \vdots \\ Hz_d
 \end{pmatrix}:
      H \in \mathcal{S}_n(\mathbb{R}) \right\} \subset \mathbb R^{nd}
  \end{equation*}
   is $\frac{d(d-1)}{2}$. In particular, this codimension is
 independent
   of $n$.
\end{lemma}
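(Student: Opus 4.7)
The plan is to apply rank-nullity to the linear map
\[
  \Phi : \mathcal{S}_n(\mathbb{R}) \to \mathbb{R}^{nd}, \qquad
  \Phi(H) = \begin{pmatrix} Hz_1 \\ \vdots \\ Hz_d \end{pmatrix}.
\]
Since $\dim \mathcal{S}_n(\mathbb{R}) = \binom{n+1}{2}$, the codimension of the image inside $\mathbb{R}^{nd}$ is
\[
  nd - \dim(\operatorname{im}\Phi) \;=\; nd - \binom{n+1}{2} + \dim(\ker\Phi),
\]
so the whole problem reduces to identifying $\ker\Phi$.

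The main step is to show $\ker\Phi \cong \mathcal{S}_{n-d}(\mathbb{R})$. Let $W = \operatorname{span}\{z_1,\dots,z_d\}$, which has dimension $d$ by linear independence, and let $W^{\perp}$ be its orthogonal complement. An $H\in \mathcal{S}_n(\mathbb{R})$ lies in $\ker\Phi$ iff $W\subseteq \ker H$; since $H$ is symmetric, $\ker H = (\operatorname{range} H)^{\perp}$, so this is equivalent to $\operatorname{range} H \subseteq W^{\perp}$ together with $H$ vanishing on $W$. Thus the elements of $\ker\Phi$ are precisely the operators $0_W \oplus S$ on $W \oplus W^{\perp} = \mathbb{R}^n$, where $S$ is an arbitrary symmetric operator on $W^{\perp}$; this gives $\dim\ker\Phi = \binom{n-d+1}{2}$.

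The codimension then follows from a short arithmetic simplification,
\[
  nd - \binom{n+1}{2} + \binom{n-d+1}{2} \;=\; \frac{d(d-1)}{2},
\]
which is visibly independent of $n$. I do not expect any serious obstacle; the only point that requires care is the equivalence between $H$ vanishing on $W$ and $\operatorname{range} H \subseteq W^{\perp}$ for symmetric $H$, which uses the identity $\ker H = (\operatorname{range} H)^{\perp}$, together with the routine verification that the block operator $0_W \oplus S$ is indeed symmetric on all of $\mathbb{R}^n$ (using $W \perp W^{\perp}$).

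As a geometric sanity check, the $\binom{d}{2}$ symmetry relations $\langle Hz_i, z_j\rangle = \langle z_i, Hz_j\rangle$ for $1\le i<j\le d$ are manifestly satisfied by every tuple $(Hz_1,\dots,Hz_d)$ in the image, so the codimension is at least $\binom{d}{2}$ provided these relations are independent; the rank-nullity calculation above shows equality holds, so in fact these symmetry relations are both independent and cut out exactly $\operatorname{im}\Phi$.
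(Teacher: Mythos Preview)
Your proof is correct and follows essentially the same approach as the paper: both define the same linear map $\Phi$, compute $\dim\ker\Phi = \binom{n-d+1}{2}$ via a block decomposition (you use the orthogonal splitting $W\oplus W^\perp$, the paper changes coordinates so that $z_j=e_j$), and then apply rank--nullity. Your closing remark interpreting the $\binom{d}{2}$ constraints as the symmetry relations $\langle Hz_i,z_j\rangle=\langle z_i,Hz_j\rangle$ is a nice addition not present in the paper.
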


\begin{proof}
   Consider the mapping $\Phi$ given by
 \begin{equation*}
  \mathcal{S}_n(\mathbb{R})  \ni H \mapsto
      \begin{pmatrix} Hz_1 \\ Hz_2 \\ \vdots \\ Hz_d \end{pmatrix}.
 \end{equation*}

   Since the span of $\{z_1,\dots,z_d\}$ has dimension $d$, it follows
 that
   the kernel of $\Phi$ has dimension $\kappa=\frac{(n-d)(n-d+1)}{2}$
 and
   hence the range has dimension $\frac{n(n+1)}{2}-\kappa$.  To see
 this
   assertion, it suffices to assume that the span of
 $\{z_1,\dots,z_d\}$
   is the span of $\{e_1,\dots,e_d\}\subset \mathbb R^n$ (the first
  $d$ standard basis vectors in $\mathbb R^n$) in which case $H$ is symmetric
 and
   $Hz_j=0$ for all $j$  if and only if
 \begin{equation*}
    H=\begin{pmatrix} 0 & 0 \\ 0 & H^\prime \end{pmatrix},
 \end{equation*}
    where $H^\prime$ is symmetric and $(n-d)\times (n-d)$.

   Finally, we conclude
   that the codimension of the range is
 \begin{equation*}
    nd -(\frac{n(n+1)}{2}-\kappa) = \frac{d(d-1)}{2}.
 \end{equation*}
\end{proof}

\begin{lemma}\cite{CHSY}
 \label{lem:CHSY}
 If $n>d$ and $\{z_1,\dots,z_d\}$ is a linearly independent
 subset of $\mathbb R^n$, then the codimension of
\begin{equation*}
  \{ \oplus_{j=1}^g \begin{pmatrix} H_jz_1 \\ H_jz_2 \\ \vdots \\
 H_jz_d \end{pmatrix}:
     H=(H_1,\dots,H_g)\in \gtupn\} \subset \mathbb R^{gnd}
\end{equation*}
 is $g\frac{d(d-1)}{2}$ (independent of $n$).
\end{lemma}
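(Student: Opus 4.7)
The plan is to reduce Lemma \ref{lem:CHSY} to the previous Lemma \ref{lem:CHSY-start} by observing that the map in question has a direct sum structure in the index $j$. Define the linear map
\begin{equation*}
  \Psi : \gtupn \to \mathbb{R}^{gnd}, \qquad
   H = (H_1,\dots,H_g) \mapsto \bigoplus_{j=1}^g
     \begin{pmatrix} H_j z_1 \\ \vdots \\ H_j z_d \end{pmatrix}.
\end{equation*}
The image of $\Psi$ is precisely the subspace whose codimension we want to compute.

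First I would note that $\gtupn$ canonically splits as $\bigoplus_{j=1}^g \mathcal{S}_n(\mathbb{R})$, and that the $j$th block of $\Psi(H)$ depends only on $H_j$. Consequently $\Psi$ decomposes as the external direct sum $\Psi = \bigoplus_{j=1}^g \Phi_j$, where each $\Phi_j : \mathcal{S}_n(\mathbb{R}) \to \mathbb{R}^{nd}$ is the map $H_j \mapsto (H_j z_1, \dots, H_j z_d)^T$ from Lemma \ref{lem:CHSY-start}. Since the $H_j$ range independently over $\mathcal{S}_n(\mathbb{R})$, the image of $\Psi$ equals $\bigoplus_{j=1}^g \operatorname{Im}(\Phi_j)$.

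Next I would invoke the elementary fact that for subspaces $V_j \subset W_j$ one has
\begin{equation*}
   \operatorname{codim}_{\oplus_j W_j}\Bigl( \bigoplus_j V_j \Bigr)
      = \sum_{j=1}^g \operatorname{codim}_{W_j}(V_j).
\end{equation*}
Applying this with $W_j = \mathbb{R}^{nd}$ and $V_j = \operatorname{Im}(\Phi_j)$, and using Lemma \ref{lem:CHSY-start} (which gives $\operatorname{codim}(V_j) = \tfrac{d(d-1)}{2}$ for each $j$, since the $z_i$ are the same linearly independent set for all $j$), we conclude that the codimension of $\operatorname{Im}(\Psi)$ in $\mathbb{R}^{gnd}$ is $g \cdot \tfrac{d(d-1)}{2}$, as claimed.

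There is no real obstacle here; the only thing to verify carefully is that the decomposition $\Psi = \bigoplus_j \Phi_j$ is genuine and that the codimension-of-direct-sums identity applies, both of which are immediate from the independence of the coordinates $H_j$. The content of the lemma is really the $g=1$ case already handled.
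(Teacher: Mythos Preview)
Your argument is correct and is exactly the approach the paper has in mind: the paper states Lemma~\ref{lem:CHSY} without proof immediately after proving Lemma~\ref{lem:CHSY-start}, clearly intending the $g$-variable case to follow from the $g=1$ case via the direct sum decomposition you describe. There is nothing to add.
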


  Finally, the form in which we generally apply the lemma is the
 following.
\begin{lemma}
 \label{lem:CHSY-in-action}
  Fix $n,$ $v\in\mathbb R^n$,  and $A\in \gatupn$ and $X\in \gxtupn$.
  If the set $\{m(a,x)v: |m| \le d\}$ (bi-degree of $m$ is at most
  $d$, meaning degree at most $d_a$ in $a$ and $d_x$ in $x$) is
 linearly
  independent, then the codimension of
$$
  \{V(A,X)[H]v: H\in \gxtupn\}
$$
 is at most $g\frac{\kappa(\kappa-1)}{2}$, where
  $\kappa = \sum^{d_a+d_x} g^j$ and where $V$ is the
  border vector associated to the given collection of monomials.
  Again, this codimension is independent of $n$.
\end{lemma}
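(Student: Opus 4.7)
The plan is to reduce Lemma \ref{lem:CHSY-in-action} directly to Lemma \ref{lem:CHSY} by taking, for the linearly independent set $\{z_1,\dots,z_K\}$ appearing in the hypothesis of CHSY, precisely the evaluated monomials $m_i(A,X)v$ that index the non-$h$ part of the entries of the border vector $V(a,x)[h]$.

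First I would enumerate the monomials $m_1,\dots,m_K$ in $(a,x)$ which appear as the second factor in the entries $h_{j}m_i(a,x)$ of the border vector; by construction each $m_i$ has degree at most $d_x-2$ in $x$ and at most $d_a$ in $a$, and hence lies in $\{m : |m|\le d\}$. A crude count (words in $g_a+g_x$ letters of length at most $d_a+d_x-2$) yields $K\le \kappa$. By the hypothesis of linear independence, the vectors $z_i := m_i(A,X)v\in\mathbb R^n$ are then linearly independent.

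Next, I would observe that the $(j,i)$-entry of $V(A,X)[H]v$ is exactly $H_j m_i(A,X)v = H_j z_i$. Hence, after a permutation of coordinates, the subspace $\{V(A,X)[H]v : H\in\gxtupn\}$ is identified with
\begin{equation*}
\left\{\,\bigoplus_{j=1}^{g_x}\begin{pmatrix}H_jz_1\\ \vdots \\ H_jz_K\end{pmatrix} : H=(H_1,\dots,H_{g_x})\in\gxtupn\,\right\}\subset \mathbb R^{g_x n K}.
\end{equation*}
Applying Lemma \ref{lem:CHSY} to this subspace (with $d$ replaced by $K$ and $g$ by $g_x$) yields that its codimension is at most $g_x\tfrac{K(K-1)}{2}$. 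Since $K\le\kappa$ and $t\mapsto t(t-1)/2$ is monotone on the non-negative integers, the codimension is bounded above by $g\tfrac{\kappa(\kappa-1)}{2}$, as required.

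No step is a substantial mathematical obstacle; everything reduces to matching the bookkeeping of the border vector with the hypotheses of Lemma \ref{lem:CHSY}, together with a coarse monomial count to produce the universal bound $\kappa$. The only care needed is to verify that the monomials indexing the border vector are indeed among those covered by the linear-independence hypothesis, which is immediate from their degree constraints.
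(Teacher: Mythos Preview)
Your proposal is correct and follows essentially the same approach as the paper: set $z_i = m_i(A,X)v$ for the monomials indexing the border vector, note there are at most $\kappa$ of them, and invoke Lemma~\ref{lem:CHSY}. The paper's proof is just a two-line version of yours; your added bookkeeping (the degree check showing the border-vector monomials lie within the hypothesis, and the monotonicity of $t\mapsto t(t-1)/2$) is exactly what that terse argument is suppressing.
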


\begin{proof}
  Let $z_m = m(A,X)v$ for the given collection of monomials $m$.
  There are at most $\kappa$ of these. Now apply the previous lemma.
\end{proof}

\section{Appendix: Generic Invertibility}
\label{sec:GS}
In this Appendix we give the proof of Lemma \ref{lem:G-S} supplied
to us by  R. Guralnick and L. Small.

\proof
(1) \ By a result of Amitsur \cite{A69}[Theorem 1] , Theorem 1) ,  if $q$ vanishes on all
symmetric matrices of size m and $deg \ q = d$, then all matrices of
size $m$ satisfy the standard identity $S_{2d}$, but this bounds the
size of the matrices. So  taking $n$ sufficiently large, $p$ does not
vanish on all $n \times n$ symmetric matrices.

(2) \ Now consider   $n \times n$ matrices $Y_1, ..., Y_g$ with
distinct commuting variables as entries (called ``generic matrices"
by some) e.g.., $Y_1= ( y_{ij} )_{i,j= 1, \cdots, n} $ and their
transposes $Y_1^T, ..., Y_g^T$. Consider the algebra these matrices
of polynomials  generate. If $n$ is a power of 2, then this is an
integral  domain with a quotient division ring $D$ contained in the
$n \times n$ matrices over
the field of rational functions.
See for example, \cite{BS88}.

(3) \ To finish the argument:  \\
take your $q$ and consider $q(Y_1+Y_1^T, ..., Y_g +Y_g^T)$ for $n$ a
sufficiently large power of $2$ (depending only the degree of $q$), so
$q$ is nonzero (note that since we are in characteristic not $2$, the
$Y_i + Y_i^T $ are generic symmetric matrices, so $q$ not zero means
that $q$ does not vanish with those arguments) and $q$ is an element
of the ring generated by $Y_1,...Y_g, Y_1^T,...Y_g^T$ and this is a
domain with a quotient division ring, it follows that
$\det q(Y_1+Y_1^T, ..., Y_g +Y_g^T)$ is a polynomial which is not
identically zero. Thus it is nonzero on an open dense set, so the
result follows. \qed

\end{document}